    \newtheorem{thm}{Theorem}                     [section]
    \newtheorem{thm*}{Theorem}
    \newtheorem{lemma}[thm]{Lemma}
    \newtheorem{cor}[thm]{Corollary}
    \newtheorem{lemma*}{Lemma}    %for AMS \newtheorem*
    \newtheorem{rems*}{Remark}   %for AMS \newtheorem*
\newcommand{\ndef}{\newcommand*}
\def\rndef{\renewcommand}
\ndef{\myaddress}[1]{\begin{center} \it\small #1 \end{center}}
\ndef{\clA}{{\mathcal A}} \ndef{\rmA}{{\mathrm A}} \ndef{\mbA}{{\mathbb A}} \ndef{\bfA}{{\mathbf A}} \ndef{\euA}{{\EuScript A}} \ndef{\frA}{{\mathfrak A}}
\ndef{\clB}{{\mathcal B}} \ndef{\rmB}{{\mathrm B}} \ndef{\mbB}{{\mathbb B}} \ndef{\bfB}{{\mathbf B}} \ndef{\euB}{{\EuScript B}} \ndef{\frB}{{\mathfrak B}}
\ndef{\clC}{{\mathcal C}} \ndef{\rmC}{{\mathrm C}} \ndef{\mbC}{{\mathbb C}} \ndef{\bfC}{{\mathbf C}} \ndef{\euC}{{\EuScript C}} \ndef{\frC}{{\mathfrak C}}
\ndef{\clD}{{\mathcal D}} \ndef{\rmD}{{\mathrm D}} \ndef{\mbD}{{\mathbb D}} \ndef{\bfD}{{\mathbf D}} \ndef{\euD}{{\EuScript D}} \ndef{\frD}{{\mathfrak D}}
\ndef{\clE}{{\mathcal E}} \ndef{\rmE}{{\mathrm E}} \ndef{\mbE}{{\mathbb E}} \ndef{\bfE}{{\mathbf E}} \ndef{\euE}{{\EuScript E}} \ndef{\frE}{{\mathfrak E}}
\ndef{\clF}{{\mathcal F}} \ndef{\rmF}{{\mathrm F}} \ndef{\mbF}{{\mathbb F}} \ndef{\bfF}{{\mathbf F}} \ndef{\euF}{{\EuScript F}} \ndef{\frF}{{\mathfrak F}}
\ndef{\clG}{{\mathcal G}} \ndef{\rmG}{{\mathrm G}} \ndef{\mbG}{{\mathbb G}} \ndef{\bfG}{{\mathbf G}} \ndef{\euG}{{\EuScript G}} \ndef{\frG}{{\mathfrak G}}
\ndef{\clH}{{\mathcal H}} \ndef{\rmH}{{\mathrm H}} \ndef{\mbH}{{\mathbb H}} \ndef{\bfH}{{\mathbf H}} \ndef{\euH}{{\EuScript H}} \ndef{\frH}{{\mathfrak H}}
\ndef{\clI}{{\mathcal I}} \ndef{\rmI}{{\mathrm I}} \ndef{\mbI}{{\mathbb I}} \ndef{\bfI}{{\mathbf I}} \ndef{\euI}{{\EuScript I}} \ndef{\frI}{{\mathfrak I}}
\ndef{\clJ}{{\mathcal J}} \ndef{\rmJ}{{\mathrm J}} \ndef{\mbJ}{{\mathbb J}} \ndef{\bfJ}{{\mathbf J}} \ndef{\euJ}{{\EuScript J}} \ndef{\frJ}{{\mathfrak J}}
\ndef{\clK}{{\mathcal K}} \ndef{\rmK}{{\mathrm K}} \ndef{\mbK}{{\mathbb K}} \ndef{\bfK}{{\mathbf K}} \ndef{\euK}{{\EuScript K}} \ndef{\frK}{{\mathfrak K}}
\ndef{\clL}{{\mathcal L}} \ndef{\rmL}{{\mathrm L}} \ndef{\mbL}{{\mathbb L}} \ndef{\bfL}{{\mathbf L}} \ndef{\euL}{{\EuScript L}} \ndef{\frL}{{\mathfrak L}}
\ndef{\clM}{{\mathcal M}} \ndef{\rmM}{{\mathrm M}} \ndef{\mbM}{{\mathbb M}} \ndef{\bfM}{{\mathbf M}} \ndef{\euM}{{\EuScript M}} \ndef{\frM}{{\mathfrak M}}
\ndef{\clN}{{\mathcal N}} \ndef{\rmN}{{\mathrm N}} \ndef{\mbN}{{\mathbb N}} \ndef{\bfN}{{\mathbf N}} \ndef{\euN}{{\EuScript N}} \ndef{\frN}{{\mathfrak N}}
\ndef{\clO}{{\mathcal O}} \ndef{\rmO}{{\mathrm O}} \ndef{\mbO}{{\mathbb O}} \ndef{\bfO}{{\mathbf O}} \ndef{\euO}{{\EuScript O}} \ndef{\frO}{{\mathfrak O}}
\ndef{\clP}{{\mathcal P}} \ndef{\rmP}{{\mathrm P}} \ndef{\mbP}{{\mathbb P}} \ndef{\bfP}{{\mathbf P}} \ndef{\euP}{{\EuScript P}} \ndef{\frP}{{\mathfrak P}}
\ndef{\clQ}{{\mathcal Q}} \ndef{\rmQ}{{\mathrm Q}} \ndef{\mbQ}{{\mathbb Q}} \ndef{\bfQ}{{\mathbf Q}} \ndef{\euQ}{{\EuScript Q}} \ndef{\frQ}{{\mathfrak Q}}
\ndef{\clR}{{\mathcal R}} \ndef{\rmR}{{\mathrm R}} \ndef{\mbR}{{\mathbb R}} \ndef{\bfR}{{\mathbf R}} \ndef{\euR}{{\EuScript R}} \ndef{\frR}{{\mathfrak R}}
\ndef{\clS}{{\mathcal S}} \ndef{\rmS}{{\mathrm S}} \ndef{\mbS}{{\mathbb S}} \ndef{\bfS}{{\mathbf S}} \ndef{\euS}{{\EuScript S}} \ndef{\frS}{{\mathfrak S}}
\ndef{\clT}{{\mathcal T}} \ndef{\rmT}{{\mathrm T}} \ndef{\mbT}{{\mathbb T}} \ndef{\bfT}{{\mathbf T}} \ndef{\euT}{{\EuScript T}} \ndef{\frT}{{\mathfrak T}}
\ndef{\clU}{{\mathcal U}} \ndef{\rmU}{{\mathrm U}} \ndef{\mbU}{{\mathbb U}} \ndef{\bfU}{{\mathbf U}} \ndef{\euU}{{\EuScript U}} \ndef{\frU}{{\mathfrak U}}
\ndef{\clV}{{\mathcal V}} \ndef{\rmV}{{\mathrm V}} \ndef{\mbV}{{\mathbb V}} \ndef{\bfV}{{\mathbf V}} \ndef{\euV}{{\EuScript V}} \ndef{\frV}{{\mathfrak V}}
\ndef{\clW}{{\mathcal W}} \ndef{\rmW}{{\mathrm W}} \ndef{\mbW}{{\mathbb W}} \ndef{\bfW}{{\mathbf W}} \ndef{\euW}{{\EuScript W}} \ndef{\frW}{{\mathfrak W}}
\ndef{\clX}{{\mathcal X}} \ndef{\rmX}{{\mathrm X}} \ndef{\mbX}{{\mathbb X}} \ndef{\bfX}{{\mathbf X}} \ndef{\euX}{{\EuScript X}} \ndef{\frX}{{\mathfrak X}}
\ndef{\clY}{{\mathcal Y}} \ndef{\rmY}{{\mathrm Y}} \ndef{\mbY}{{\mathbb Y}} \ndef{\bfY}{{\mathbf Y}} \ndef{\euY}{{\EuScript Y}} \ndef{\frY}{{\mathfrak Y}}
\ndef{\clZ}{{\mathcal Z}} \ndef{\rmZ}{{\mathrm Z}} \ndef{\mbZ}{{\mathbb Z}} \ndef{\bfZ}{{\mathbf Z}} \ndef{\euZ}{{\EuScript Z}} \ndef{\frZ}{{\mathfrak Z}}
\ndef{\tA}{{\widetilde A}} \ndef{\tcA}{{\widetilde\clA}} \ndef{\ttcA}{\widetilde{\tcA}} \ndef{\sfA}{{\textsf A}} \ndef{\ttA}{\widetilde{\tA}} \ndef{\dzA}{{A^\sharp}}
\ndef{\tB}{{\widetilde B}} \ndef{\tcB}{{\widetilde\clB}} \ndef{\ttcB}{\widetilde{\tcB}} \ndef{\sfB}{{\textsf B}} \ndef{\ttB}{\widetilde{\tB}} \ndef{\dzB}{{B^\sharp}}
\ndef{\tC}{{\widetilde C}} \ndef{\tcC}{{\widetilde\clC}} \ndef{\ttcC}{\widetilde{\tcC}} \ndef{\sfC}{{\textsf C}} \ndef{\ttC}{\widetilde{\tC}} \ndef{\dzC}{{C^\sharp}}
\ndef{\tD}{{\widetilde D}} \ndef{\tcD}{{\widetilde\clD}} \ndef{\ttcD}{\widetilde{\tcD}} \ndef{\sfD}{{\textsf D}} \ndef{\ttD}{\widetilde{\tD}} \ndef{\dzD}{{D^\sharp}}
\ndef{\tE}{{\widetilde E}} \ndef{\tcE}{{\widetilde\clE}} \ndef{\ttcE}{\widetilde{\tcE}} \ndef{\sfE}{{\textsf E}} \ndef{\ttE}{\widetilde{\tE}} \ndef{\dzE}{{E^\sharp}}
\ndef{\tF}{{\widetilde F}} \ndef{\tcF}{{\widetilde\clF}} \ndef{\ttcF}{\widetilde{\tcF}} \ndef{\sfF}{{\textsf F}} \ndef{\ttF}{\widetilde{\tF}} \ndef{\dzF}{{F^\sharp}}
\ndef{\tG}{{\widetilde G}} \ndef{\tcG}{{\widetilde\clG}} \ndef{\ttcG}{\widetilde{\tcG}} \ndef{\sfG}{{\textsf G}} \ndef{\ttG}{\widetilde{\tG}} \ndef{\dzG}{{G^\sharp}}
\ndef{\tH}{{\widetilde H}} \ndef{\tcH}{{\widetilde\clH}} \ndef{\ttcH}{\widetilde{\tcH}} \ndef{\sfH}{{\textsf H}} \ndef{\ttH}{\widetilde{\tH}} \ndef{\dzH}{{H^\sharp}}
\ndef{\tI}{{\widetilde I}} \ndef{\tcI}{{\widetilde\clI}} \ndef{\ttcI}{\widetilde{\tcI}} \ndef{\sfI}{{\textsf I}} \ndef{\ttI}{\widetilde{\tI}} \ndef{\dzI}{{I^\sharp}}
\ndef{\tJ}{{\widetilde J}} \ndef{\tcJ}{{\widetilde\clJ}} \ndef{\ttcJ}{\widetilde{\tcJ}} \ndef{\sfJ}{{\textsf J}} \ndef{\ttJ}{\widetilde{\tJ}} \ndef{\dzJ}{{J^\sharp}}
\ndef{\tK}{{\widetilde K}} \ndef{\tcK}{{\widetilde\clK}} \ndef{\ttcK}{\widetilde{\tcK}} \ndef{\sfK}{{\textsf K}} \ndef{\ttK}{\widetilde{\tK}} \ndef{\dzK}{{K^\sharp}}
\ndef{\tL}{{\widetilde L}} \ndef{\tcL}{{\widetilde\clL}} \ndef{\ttcL}{\widetilde{\tcL}} \ndef{\sfL}{{\textsf L}} \ndef{\ttL}{\widetilde{\tL}} \ndef{\dzL}{{L^\sharp}}
\ndef{\tM}{{\widetilde M}} \ndef{\tcM}{{\widetilde\clM}} \ndef{\ttcM}{\widetilde{\tcM}} \ndef{\sfM}{{\textsf M}} \ndef{\ttM}{\widetilde{\tM}} \ndef{\dzM}{{M^\sharp}}
\ndef{\tN}{{\widetilde N}} \ndef{\tcN}{{\widetilde\clN}} \ndef{\ttcN}{\widetilde{\tcN}} \ndef{\sfN}{{\textsf N}} \ndef{\ttN}{\widetilde{\tN}} \ndef{\dzN}{{N^\sharp}}
\ndef{\tO}{{\widetilde O}} \ndef{\tcO}{{\widetilde\clO}} \ndef{\ttcO}{\widetilde{\tcO}} \ndef{\sfO}{{\textsf O}} \ndef{\ttO}{\widetilde{\tO}} \ndef{\dzO}{{O^\sharp}}
\ndef{\tP}{{\widetilde P}} \ndef{\tcP}{{\widetilde\clP}} \ndef{\ttcP}{\widetilde{\tcP}} \ndef{\sfP}{{\textsf P}} \ndef{\ttP}{\widetilde{\tP}} \ndef{\dzP}{{P^\sharp}}
\ndef{\tQ}{{\widetilde Q}} \ndef{\tcQ}{{\widetilde\clQ}} \ndef{\ttcQ}{\widetilde{\tcQ}} \ndef{\sfQ}{{\textsf Q}} \ndef{\ttQ}{\widetilde{\tQ}} \ndef{\dzQ}{{Q^\sharp}}
\ndef{\tR}{{\widetilde R}} \ndef{\tcR}{{\widetilde\clR}} \ndef{\ttcR}{\widetilde{\tcR}} \ndef{\sfR}{{\textsf R}} \ndef{\ttR}{\widetilde{\tR}} \ndef{\dzR}{{R^\sharp}}
\ndef{\tS}{{\widetilde S}} \ndef{\tcS}{{\widetilde\clS}} \ndef{\ttcS}{\widetilde{\tcS}} \ndef{\sfS}{{\textsf S}} \ndef{\ttS}{\widetilde{\tS}} \ndef{\dzS}{{S^\sharp}}
\ndef{\tT}{{\widetilde T}} \ndef{\tcT}{{\widetilde\clT}} \ndef{\ttcT}{\widetilde{\tcT}} \ndef{\sfT}{{\textsf T}} \ndef{\ttT}{\widetilde{\tT}} \ndef{\dzT}{{T^\sharp}}
\ndef{\tU}{{\widetilde U}} \ndef{\tcU}{{\widetilde\clU}} \ndef{\ttcU}{\widetilde{\tcU}} \ndef{\sfU}{{\textsf U}} \ndef{\ttU}{\widetilde{\tU}} \ndef{\dzU}{{U^\sharp}}
\ndef{\tV}{{\widetilde V}} \ndef{\tcV}{{\widetilde\clV}} \ndef{\ttcV}{\widetilde{\tcV}} \ndef{\sfV}{{\textsf V}} \ndef{\ttV}{\widetilde{\tV}} \ndef{\dzV}{{V^\sharp}}
\ndef{\tW}{{\widetilde W}} \ndef{\tcW}{{\widetilde\clW}} \ndef{\ttcW}{\widetilde{\tcW}} \ndef{\sfW}{{\textsf W}} \ndef{\ttW}{\widetilde{\tW}} \ndef{\dzW}{{W^\sharp}}
\ndef{\tX}{{\widetilde X}} \ndef{\tcX}{{\widetilde\clX}} \ndef{\ttcX}{\widetilde{\tcX}} \ndef{\sfX}{{\textsf X}} \ndef{\ttX}{\widetilde{\tX}} \ndef{\dzX}{{X^\sharp}}
\ndef{\tY}{{\widetilde Y}} \ndef{\tcY}{{\widetilde\clY}} \ndef{\ttcY}{\widetilde{\tcY}} \ndef{\sfY}{{\textsf Y}} \ndef{\ttY}{\widetilde{\tY}} \ndef{\dzY}{{Y^\sharp}}
\ndef{\tZ}{{\widetilde Z}} \ndef{\tcZ}{{\widetilde\clZ}} \ndef{\ttcZ}{\widetilde{\tcZ}} \ndef{\sfZ}{{\textsf Z}} \ndef{\ttZ}{\widetilde{\tZ}} \ndef{\dzZ}{{Z^\sharp}}
\ndef{\bfa}{{\mathbf a}}
\ndef{\bfb}{{\mathbf b}}
\ndef{\bfc}{{\mathbf c}}
\ndef{\bfd}{{\mathbf d}}
\ndef{\euu}{{\EuScript u}}
  \ndef{\eps}{\varepsilon}
\let\geq\geqslant
\let\leq\leqslant
\ndef{\lims}[1]{\lim\limits_{#1}}
\ndef{\sums}[1]{\sum\limits_{#1}}
\ndef{\ints}[1]{\int_{#1}}
\ndef{\sups}[1]{\sup\limits_{#1}}
\ndef{\liminfty}[1]{\lims{#1\to\infty}}
\ndef{\suminf}[1]{\sums{#1=1}^\infty}
\ndef{\limo}[1]{\omega\mbox{-}\!\!\!\lims{#1\to\infty}}          % \omega limit
\ndef{\limL}[1]{\rmL\mbox{-}\!\!\!\lims{#1\to\infty}}            % ``L" limit
\ndef{\limLOne}[1]{\clL_1\mbox{-}\!\lims{#1}}
\ndef{\tildelimo}[1]{\tilde\omega\mbox{-}\!\!\!\lims{#1\to\infty}}
\ndef{\slim}{\mathrm{s}\mbox{-}\!\!\lim}          % strong limit
\ndef{\wlim}{\mathrm{w}\mbox{-}\!\lim}          % strong limit
\ndef{\Aut}{\operatorname{Aut}}      % group of automorphisms
\ndef{\Ch}{\operatorname{ch}}        % Chern character
\ndef{\End}{\operatorname{End}}      % group of endomorphisms
\ndef{\Hom}{\operatorname{Hom}}      % group of homomorphisms
\rndef{\ker}{\operatorname{ker}}      % kernel of an operator
\ndef{\coker}{\operatorname{coker}}      % kernel of an operator
\ndef{\im}{\operatorname{im}}        % image of an operator
\ndef{\Log}{\operatorname{Log}}      % logarithm
\ndef{\OP}{\operatorname{OP}}        % abstract PDO's
\ndef{\Op}{\operatorname{Op}}        % abstract PDO's
\ndef{\Symb}{\operatorname{Symb}}    % symbol
\ndef{\Tr}{\operatorname{Tr}}        % usual trace
\ndef{\Wres}{\operatorname{Wres}}    % Wodzicki residue
\ndef{\cl}{\operatorname{cl}}        % Clifford
\ndef{\com}{\operatorname{com}}
\ndef{\const}{\operatorname{const}}  % constant
\ndef{\conv}{\operatorname{conv}}    % convex hull
\rndef{\det}{\operatorname{det}}     % determinant
\ndef{\detFK}[1]{\Delta\brs{#1}} % Fuglede-Kadison's determinant
\ndef{\detFKrel}[2]{\Delta_{#2}\brs{#1}} % Fuglede-Kadison's determinant
\ndef{\adj}{\operatorname{adj}}    % diagonal operator
\ndef{\diag}{\operatorname{diag}}    % diagonal operator
\ndef{\dist}{\operatorname{dist}}    % distance
\ndef{\dom}{\operatorname{dom}}      % domain
\ndef{\ec}{\operatorname{ec}}        % essential codimension
\ndef{\id}{1}                        % identity operator
\ndef{\ind}{\operatorname{ind}}      % index
\ndef{\mydeg}{\operatorname{deg}}    % degree of a diff. form
\ndef{\op}{\operatorname{op}}
\ndef{\rank}{\operatorname{rank}}
\ndef{\res}{\operatorname{res}}      % residue
\ndef{\rng}{\operatorname{ran}}      % range
\ndef{\sflow}{\operatorname{sf}}     % spectral flow
\ndef{\isf}{\operatorname{isf}}      % infinitesimal spectral flow
\ndef{\sign}{\operatorname{sign}}    % signum (a la C.-Ph.)
\ndef{\sgn}{\operatorname{sgn}}      % signum (a la Connes)
\ndef{\sing}{\operatorname{sing}}    % singular
\ndef{\supp}{\operatorname{supp}}    % support
\ndef{\tr}{\operatorname{tr}}        % trace
\ndef{\var}{\operatorname{var}}      % variation of measure
\ndef{\vol}{\operatorname{vol}}      % volume or volume form
\ndef{\wn}{\operatorname{wn}}        % winding number
\ndef{\wres}{\operatorname{wres}}    % Wodzicki residue
\rndef{\Im}{\operatorname{Im}}       % imaginary part of an operator
\rndef{\Re}{\operatorname{Re}}       % real part of an operator
\ndef{\prng}[1]{\mathrm R_{#1}} % {[\rng {#1}]}          % projection onto the range of #1
\ndef{\pker}[1]{\mathrm N_{#1}} % {[\ker {#1}]}          % projection onto the kernel of #1
\ndef{\rprng}[2]{\mathrm R_{#1}^{#2}}           % projection onto the relative range of #1
\ndef{\rpker}[2]{\mathrm N_{#1}^{#2}}           % projection onto the relative kernel of #1
\ndef{\rsupp}[1]{\supp_r(#1)}
\ndef{\lsupp}[1]{\supp_l(#1)}
\ndef{\rslv}[1]{R_z(#1)}      % resolvent
\ndef{\HH}{H}                 % initial operator of perturbation theory
\ndef{\tHH}{\tilde \HH}       % final operator of perturbation theory
\ndef{\VV}{V}                 % perturbation operator of perturbation theory
\ndef{\Rz}{R_z}               % resolvent of the initial operator
\ndef{\tRz}{\tR_z}            % resolvent of the final operator
\ndef{\psif}[1]{#1^{[1]}} % {\psi_{#1}}  % divided difference
\ndef{\WPlus}[1]{W_{#1}(\mbR)} %\ndef{\CPlus}[1]{C^{#1+}(\mbR)}
\newcommand{\xia}{\xi^{(a)}}
\newcommand{\xis}{\xi^{(s)}}
\ndef{\bndl}{\xi}                         % vector bundle
\ndef{\bndlA}{\eta}                       % vector bundle
\ndef{\GlueMap}{\varphi}                  % glue map of a bundle
\ndef{\ChartMap}{h}                       % chart diffeomorphism map of a manifold
\ndef{\chern}{\ensuremath{\mathrm{ch}}}
\ndef{\hilb}{\clH}                     % Hilbert space
\ndef{\hilba}{\clH^{(a)}}                    % absolutely continuous part of Hilbert space (wrt to a s.a. operator)
\ndef{\hilbs}{\clH^{(s)}}                    % singular part of Hilbert space (wrt to a s.a. operator)
   \ndef{\hilbasargument}{(\hilb)} %{(\hilb)}
\ndef{\LpH}[1]{\clL_{#1}\hilbasargument}       % the set of ...
\ndef{\saLpH}[1]{\clL_{sa}^{#1}\hilbasargument}       % the set of ...
\ndef{\clBH}{\clB\hilbasargument}              % the set of BOUNDED linear operators on Hilbert space
\ndef{\ubBH}{\clB_1\hilbasargument}            % the unit ball of the algebra of all BOUNDED linear operators on Hilbert space
\ndef{\clCH}{\clC\hilbasargument}              % the set of CLOSED DENSELY-DEFINED linear operators on Hilbert space
\ndef{\clKH}{\clK\hilbasargument}              % the set of COMPACT operators
\ndef{\clFH}{\clF\hilbasargument}              % the set of BOUNDED FREDHOLM operators
\ndef{\clUH}{\clU\hilbasargument}              % the set of UNITARIES on Hilbert space
\ndef{\clCFH}{{\clC\clF}\hilbasargument}       % the set of CLOSED DENSELY-DEFINED FREDHOLM OPERATORS on Hilbert space
\ndef{\saBH}{\clB_{sa}\hilbasargument}         % the set of S.-A. BOUNDED operators on Hilbert space
\ndef{\saCH}{\clC_{sa}\hilbasargument}         % the set of CLOSED DENSELY-DEFINED S.-A. operators on Hilbert space
\ndef{\saFH}{\clF_{sa}\hilbasargument}         % the set of BOUNDED FREDHOLM s.-a. operators
\ndef{\saKH}{\clK_{sa}\hilbasargument}         % the set of COMPACT S.-A. operators
\ndef{\saCFH}{\clC\clF_{sa}\hilbasargument}    % the set of CLOSED DENSELY-DEFINED S.-A. FREDHOLM operators on Hilbert space
\ndef{\clUFH}{\clU\clF\hilbasargument}         % the set of UNITARIES such that U+I is FREDHOLM
\ndef{\Uinj}{\clU_{inj}\hilbasargument}        % the set of UNITARIES such that U-I is injective
\ndef{\UFinj}{\clU\clF_{inj}\hilbasargument}   % the set of UNITARIES such that U-I is INJECTIVE and U+I is FREDHOLM
\ndef{\spproj}[2]{E^{#1}_{#2}}                      % spectral projection of #1
\ndef{\spprojb}[2]{E^{#2}_{#1}}                     % spectral projection of #1
\ndef{\LpN}[1]{\clL^{#1}(\clN,\tau)}     % noncommutative \mathcal L_p space
\ndef{\saLpN}[1]{\clL^{#1}_{sa}(\clN,\tau)} % s.-a. part of noncommutative \mathcal L_p space
\ndef{\rLpN}[1]{L^{#1}(\clN,\tau)}       % noncommutative L_p space
\ndef{\clAND}{(\clA,\clN,D)}             % spectral triple (A,N,D)
\ndef{\clBA}{{\clB(\clA)}}
\ndef{\saKN}{{\clK_{sa}(\clN,\tau)}}          % s.-a. \tau-compact operators
\ndef{\clKN}{{\clK(\clN,\tau)}}          % \tau-compact operators
\ndef{\clKtN}{{\clK(\tilde\clN,\tau)}}   % \tau-compact (maybe unbounded) operators
\ndef{\clFN}{{\clF(\clN,\tau)}}          % \tau-Fredholm operators
\ndef{\saFN}{{\clF_{sa}(\clN,\tau)}}     % self-adjoint \tau-Fredholm operators
\ndef{\clPN}{\clP(\clN)}                 % projections of N
\ndef{\clQN}{\clQ(\clN,\tau)}            % Calkin algebra N/K
\ndef{\infPN}{{\clP_\tau^\infty(\clN)}}  % infinite projections of N
\ndef{\clOF}[2]{\clF_{#1\mbox{-}#2}(\clN,\tau)}         % relatively Fredholm operators
\ndef{\oind}[2]{{\rm \tau\mbox{-}ind}_{#1\mbox{-}#2}}   % relative index
\ndef{\tind}{\tau\mbox{-}\ind}                  % semifinite index
\ndef{\DInd}{\ind_{\clD,\tau}}           % semifinite index
\ndef{\BF}{Breuer-Fredholm}              % Breuer-Fredholm
\ndef{\skewfred}[2]{$(#1\cdot #2)$ $\tau$\tire Fredholm}   % skew corner Fredholm
\ndef{\affl}{\eta}                       % affiliated
\ndef{\vNa}{von Neumann algebra}         % von Neumann algebra
\ndef{\nsf}{faithful normal semifinite } % normal semifinite faithful
\ndef{\taubrs}[1]{\tau\brackets{#1}}     % n.s.f. trace with brackets
\ndef{\sqbrs}[1]{[#1]}        % brackets
\ndef{\Sqbrs}[1]{\big[#1\big]}        % brackets
\ndef{\SqBrs}[1]{\Big[#1\Big]}        % brackets
\ndef{\domd}{\bigcap\limits_{n\ge 0} \dom\;\delta^n}         % domain of \delta^n's
\ndef{\DiffOP}{{\rm \clD}}
\ndef{\ADA}{\clA \cup [\clD,\clA]}
\ndef{\DixIdeal}[1]{\LpH{#1,\infty}}               % Dixmier ideal
\ndef{\dixideal}{\ell^{1,\infty}}                  % commutative Dixmier ideal
\ndef{\WDixIdeal}{\LpH{1,\mathrm w}}               % weak Dixmier ideal
\ndef{\DixIdealPos}[1]{\DixIdeal{#1}_+}            % positive part of Dixmier ideal
\ndef{\DixIdealN}[1]{\LpN{#1,\infty}}              % semifinite Dixmier ideal
\ndef{\DixIdealNPar}[2]{\clL^{#1,\infty}_{#2}(\clN,\tau)}    % semifinite Dixmier ideal
\ndef{\DixIdealNPos}[1]{\LpN{#1,\infty}_+}                   % positive part of semifinite Dixmier ideal
\ndef{\TrD}{\Tr_\omega}                                      % Dixmier trace
\ndef{\tauD}{{\tau_\omega}}                                  % semifinite Dixmier trace
\ndef{\ILogN}{\frac 1{\log(1+N)}}
\ndef{\DixNorm}[1]{\norm{#1}_{(1,\infty)}}                   % Dixmier norm
\ndef{\DixInt}[1]{\ints 0^t \mu_s(#1)\,ds}
\ndef{\DixIntL}[1]{\ints 0^{\lambda_{1/t}(#1)}\mu_s(#1)\,ds}
    \ndef{\SmallIdeal}{{\clL^{1, \mathrm w}}}
    \ndef{\SmallIdealMeas}{{\clL^{1, \mathrm w}_m}}
    \ndef{\DixIntII}[1]{\int_0^t \mu_s(#1)\,ds}
    \ndef{\DixIntf}[1]{\Phi_t(#1)}
    \ndef{\DixIntg}[1]{\Psi_t(#1)}
\ndef{\lpi}{\clL^{1,\pi}(\clN,\tau)}
\ndef{\strl}[1]{\stackrel \longrightarrow {#1}}
\ndef{\IIinfty}{$\mathrm{II}_\infty$\ }
\ndef{\fourier}[1]{\clF(#1)}          % Fourier transform of #1
\ndef{\HaarMeasBohrs}{\nu}            % Haar measure of the Bohr compact
\ndef{\BrownsMeas}{\mu}               % Brown's measure
\ndef{\BohrCont}[1]{\tilde{#1}}       % continuation of a function to the Bohr compact
\ndef{\APMean}{{M}}                   % mean value of a.p. function
\ndef{\CDSS}{{\clA_B}}                % Coburn-Douglas-Schaeffer-Singer's factor
\ndef{\matr}{{\rm Mat}}               % standard matrix algebra
\ndef{\seque}[1]{\ensuremath{\{#1_n\}_{n=1}^\infty}}    % sequence of numbers  a_1, a_2, ...
\ndef{\sequen}[2]{\ensuremath{\{#1_#2\}_{#2=1}^\infty}}    % sequence of numbers  a_1, a_2, ...
\ndef{\Seque}[1]{\ensuremath{\left(#1_0,#1_1,#1_2,\dots\right)}}    % sequence of numbers  a_1, a_2, ...
\ndef{\Cesaro}{H}                           % the Cesaro operator (on sequences)
\ndef{\CesaroRPlus}{M}                      % the Cesaro operator on positive semiaxis
\ndef{\Dilation}{D}                         % the dilation operator (on sequences)
\ndef{\Shift}{T}                            % the shift operator (on sequences)
\ndef{\norm}[1]{\left\Vert#1\right\Vert}    % norm of #1
\ndef{\TrNorm}[1]{\norm{#1}_1}              % trace norm of #1
\ndef{\HSNorm}[1]{\norm{#1}_2}              % Hilbert-Schmidt norm of #1
\ndef{\InftyNorm}[1]{\norm{#1}_\infty}      % uniform norm of #1
\ndef{\normQN}[1]{\norm{#1}_{\clQN}}        % Calkin norm of #1
\ndef{\clLpnorm}[2]{\norm{#2}_{\clL^{#1}}}    % $1,\infty$- trace norm of #1
\ndef{\clLnorm}[1]{\clLpnorm{1}{#1}}    % $1,\infty$- trace norm of #1
\ndef{\ccurve}{\gamma}                      % a curve in complex plane for Cauchy integral
\ndef{\abs}[1]{\left\lvert#1\right\rvert}   % absolute value of #1
\ndef{\set}[1]{\left\{#1\right\}}           % set of ...
\ndef{\brackets}[1]{\left(#1\right)}        % brackets
\ndef{\brs}[1]{\brackets{#1}}               % brackets
\ndef{\Brs}[1]{\big(#1\big)}                % brackets
\ndef{\BRS}[1]{\Big(#1\Big)}                % brackets
\ndef{\scal}[2]{\left\la #1,#2\right\ra}               % scalar product
\ndef{\precprec}{\prec\!\!\!\prec}
\ndef{\qeq}{\stackrel?=}
\ndef{\spectrum}[1]{\sigma_{#1}} %{\mathrm{Spec}(#1)}       % spectrum of an operator
\ndef{\spectruma}[1]{\sigma^{(a)}_{#1}} %{\mathrm{Spec}(#1)}       % absolutely continuous spectrum of an operator
\ndef{\numrange}[1]{\mathrm{W}(#1)}                         % numerical range of an operator
\rndef{\emptyset}{\varnothing}                              % empty set
\ndef{\csupp}{c}                           % subscript for compactly supported functions
\ndef{\closure}[1]{\overline{#1}}
\ndef{\linspan}[1]{\mathrm{span}\ {#1}}
\ndef{\bddborel}[1]{B(#1)}                 % the space of bounded Borel functions on the measure space #1
\ndef{\charfunc}{\chi}
\ndef{\FrDer}{\euD}                        % Fr\'echet derivative
\ndef{\LieDer}[1]{\pounds_{#1}\,}          % Lie derivative
\ndef{\dds}{\left.\frac d{ds} \right|_{s = 0}}
\ndef{\ortcmp}[1]{#1^{\scriptscriptstyle \perp}}            % orthogonal complement of projection #1
\ndef{\Laplace}{\Delta}                    % Laplace operator
\ndef{\matrPQ}[3]
{
    \left(
      \begin{array}{cc}
        #1_{11} & #1_{12} \\
        #1_{21} & #1_{22}
      \end{array}
    \right)_{[#2,#3]}
}
\ndef{\margOK}{\marginpar{\bf \small OK}}
\newcounter{margcomcount}
\ndef{\margcom}[1]{\marginpar{\bf \small #1} \addtocounter{margcomcount}{1}
   \index{\indexcom{{\bf COMMENT: #1}}}}
\newcounter{margproof}
\ndef{\margproof}{\marginpar{\bf \small PROOF} \addtocounter{margproof}{1}
  \index{**** \indexcom{{\bf PROOF}}}}
\newcounter{margdetails}
\ndef{\margdetails}{\marginpar{\bf Details} \addtocounter{margdetails}{1}
  \index{**** \indexcom{{\bf DETAILS}}}}
\newcounter{margproofb}
\ndef{\margproofb}[1]{\marginpar{\bf \small Proof(B) #1} \addtocounter{margproofb}{1}
  \index{**** \indexcom{{\bf PROOF(B): #1}}}}
\newcounter{margdetailsb}
\ndef{\margdetailsb}[1]{\marginpar{\bf \small Details(B)} \addtocounter{margdetailsb}{1}
  \index{**** \indexcom{{\bf DETAILS(B): \\ #1}}}}
\newcounter{margdetailsc}
\ndef{\margdetailsc}[1]{\marginpar{\bf \small Details(C)} \addtocounter{margdetailsc}{1}
  \index{**** \indexcom{{\bf DETAILS(C): \\ #1}}}}
\newcounter{margcomcountb}
\ndef{\margcomb}[1]{\marginpar{\bf \small #1} \addtocounter{margcomcountb}{1}
   \index{\indexcom{{\bf COMMENT(B): \\ #1}}}}
\ndef{\mytimes}{\!\times\!}
\ndef{\sss}[1]{\subsubsection{}\label{#1}}
\rndef{\phi}{\varphi}
\ndef{\OpenUnitDisk}{D}
\ndef{\RHS}{RHS}                            % right hand side
\ndef{\LHS}{LHS} %right and left hand side  % left hand side
\ndef{\ttt}{\Leftrightarrow}
\ndef{\then}{\Rightarrow}
\ndef{\tto}{\longrightarrow}
\ndef{\nno}{\nonumber\\}
\ndef{\newn}[1]{\index{#1} {\bfseries #1}}       % new notion
\ndef{\la}{\langle}
\ndef{\ra}{\rangle} \ndef{\dbar}{{\;\bar{\phantom{o}} \!\!\!\! d}}
\ndef{\stl}[1]{\stackrel{\vbox to 0pt{\vss\hbox{$\scriptstyle #1$}}}}
\ndef{\mathcomment}[1]{{\hfill \qquad\qquad\qquad\qquad\qquad\text{by (#1)}}}        % for comments at the ends of lines with math formulas
\ndef{\mathcomm}[1]{{\hfill \qquad\qquad\qquad\qquad\qquad\text{#1}}}        % for comments at the ends of lines with math formulas
\ndef{\details}[1]{\smallskip\begin{center} {\bf Here:}
#1\end{center}\medskip} \ndef{\indexcom}[1]{ --- #1}
\ndef{\longsim}{\ \sim \ }              % for use in formulas.
\ndef{\tire}{-}              % for use in formulas.
\ndef{\intinfinf}{\int_{-\infty}^\infty}
\ndef{\refnsftrace}{\cite[V.\,2.\,1]{TakI}} % reference to definition of n.f.s. trace %\cite[Definition V.2.1]{TakI}
\ndef{\refaffloper}{\cite[IV.\,5, Exercise 3]{TakI}} % reference to definition of affiliated operator
\ndef{\refsemifinvNa}{\cite[V.\,1.\,21]{TakI}} %  semifinite vNa
\ndef{\reftaumeasurable}{\cite[Definition 1.2]{FK86PJM}} % tau-measurable operator
\ndef{\reftautraceclassaffl}{\cite[V.2, p.\,320]{TakI}} % tau-trace class affiliated operator
\ndef{\refinvoperideal}{\cite[Appendix A.2]{CP2}} % invariant operator ideal
\ndef{\reftautracenorm}{\cite[V.2, p.\,320]{TakI}} % tau trace norm (1-norm)
\ndef{\reftaucompact}{\cite{}} % tau compact operator
\ndef{\reftauFredholm}{\cite[Appendix B]{PR94JFA}} % tau Fredholm operator
     \ndef{\npartial}{\slash\!\!\!\partial}
     \ndef{\Heis}{\operatorname{Heis}}
     \ndef{\Solv}{\operatorname{Solv}}
     \ndef{\Spin}{\operatorname{Spin}}
     \ndef{\SO}{\operatorname{SO}}
     \ndef{\Index}{\operatorname{index}}
             \ndef{\p}{\partial}
             \ndef{\dd}{|\clD|}
             \ndef{\n}{\parallel}
     \ndef{\gf}[2]{\genfrac{}{}{0pt}{}{#1}{#2}}
     \ndef{\ta}{\widetilde{\alpha}}
     \ndef{\tb}{\widetilde{\beta}}
     \ndef{\txi}{\widetilde{\xi}}
     \ndef{\tk}{\widetilde{K}}
     \ndef{\CGh}{\widetilde{\CG}}
     \ndef{\boe}{{\bf e}}\ndef{\bt}{{\bf t}}
     \ndef{\vth}{\vartheta}
     \ndef{\db}{\overline{\partial}}
     \ndef{\hV}{\hat{V}}
     \ndef{\cag}{{\clA^\Gamma}}
     \ndef{\sind}{\sigma{\rm -ind}}
\newcounter{slidecount}
\newcommand{\slide}[2]{
  \newpage
  \addtocounter{slidecount}{1}
  \renewcommand{\@oddhead}{{\small Advanced Mathematics 1A -- 2009 \hfil Slide #1-\arabic{slidecount}}}
  \begin{center} \bf #2 \end{center}
}
\let\LatexCite=\cite  % just renaming
\let\ifnumref\iffalse % use this command if you want LETTER REFERENCES like [CM]
\ndef{\ifuncited}[4]{\expandafter\ifx\csname used#4\endcsname\relax}
\ndef{\ifcited}[4]{\expandafter\ifx\csname used#4\endcsname\relax\else}
  \ndef{\papertitle}[1]{ \emph{#1}, }
  \ndef{\paperauthor}[2]{#2}  %{\ifnum#1=0$^*$\fi#2}
  \ndef{\pbbi}[9]{%
      \ifcited{#1}{#2}{#3}{#5}%
        \ifnumref%
          \bibitem{#5}\paperauthor{#1}{#6},\papertitle{#7}#8.%
        \else%
          \advance #9 by 1%
          \ifnum#9<1%
            \bibitem[#4]{#5}\paperauthor{#1}{#6}, \papertitle{#7}#8.%
          \else%
            \bibitem[#4$_{\the#9}$]{#5}\paperauthor{#1}{#6},\papertitle{#7}#8.%
          \fi%
        \fi%
      \fi%
  }
  \ndef{\mbbi}[8]{%
     \ifcited{#1}{#2}{#3}{#5}%
        \ifnumref%
          \bibitem{#5}\paperauthor{#1}{#6},\papertitle{#7}#8.%
        \else%
          \bibitem[#4]{#5}\paperauthor{#1}{#6},\papertitle{#7}#8.%
        \fi%
     \fi%
  }
\ndef{\AddCite}[1]{%
   \ifuncited{0}{0}{0}{#1}%
     \expandafter\gdef\csname used#1\endcsname {}%
   \fi%
}
\def\ProcessCite#1,{%
     \ifx\relax#1%
         \let\next=\relax%
     \else%
         \AddCite{#1}%
         \let\next=\ProcessCite%
     \fi%
     \next%
}
\ndef{\AddCites}[1]{\ProcessCite#1,\relax,}
\ndef{\CiteWithoutExtension}[1]{%
   \AddCites{#1}%
   \LatexCite{#1}%
}
\def\CiteWithExtension[#1]#2{%
   \AddCites{#2}%
   \LatexCite[#1]{#2}%
}
\ndef{\CleverCite}{%
    \ifx\NChar[ %
       \let\MyCite=\CiteWithExtension %
    \else %
       \let\MyCite=\CiteWithoutExtension %
    \fi %
    \MyCite%
}
\renewcommand{\cite}{\futurelet\NChar\CleverCite}
      \ndef{\volume}[1]{{\bf #1}}
      \ndef{\VolYearPP}[3]{\ifnum#2=0 (to appear)\else\volume{#1} (#2), #3\fi}
      \ndef{\VolNoYearPP}[4]{\ifnum#3=0 (to appear)\else\volume{#1} #2 (#3), #4\fi}
      \ndef{\libcode}[1]{}%{{,\bf\ #1}}
\ndef{\jnActaMath}[3]{Acta Math. \VolYearPP{#1}{#2}{#3}}                       % ActM        \libcode{510.5 A18}
\ndef{\jnAdvMath}[3]{Adv. in~Math. \VolYearPP{#1}{#2}{#3}}                     % AdvM        \libcode{510.5 A24}
\ndef{\jnAlgAnal}[3]{Algebra i~Analiz \VolYearPP{#1}{#2}{#3}}
\ndef{\jnAmerJMath}[3]{Amer. J. Math. \VolYearPP{#1}{#2}{#3}}                  % AJM         \libcode{}
\ndef{\jnAmerMathMonth}[3]{Amer. Math. Monthly \VolYearPP{#1}{#2}{#3}}         % AMM         \libcode{510.5 A3}
\ndef{\jnAnnMath}[4]{Ann. of~Math. \VolNoYearPP{#1}{#2}{#3}{#4}}               % AnnM        \libcode{510.5 A61}
\ndef{\jnAnalMath}[3]{J. Anal. Math. \VolYearPP{#1}{#2}{#3}}                   % JAnalM      \libcode{}
\ndef{\jnBullLondMathSoc}[3]{Bull. London Math. Soc. \VolYearPP{#1}{#2}{#3}}   % BLMS        \libcode{}
\ndef{\jnBullAMS}[3]{Bull. Amer. Math. Soc. \VolYearPP{#1}{#2}{#3}}   % BLMS        \libcode{}
\ndef{\jnCanMathBull}[3]{Canad. Math. Bull. \VolYearPP{#1}{#2}{#3}}            % CMB         \libcode{}
\ndef{\jnCanMath}[3]{Canad. J.~Math. \VolYearPP{#1}{#2}{#3}}             % CJM         \libcode{}
\ndef{\jnCommMathPhys}[3]{Comm. Math. Phys. \VolYearPP{#1}{#2}{#3}}             % CMP         \libcode{530.1505 C73}
\ndef{\jnCommPDE}[3]{Comm. Partial Differential Equations \VolYearPP{#1}{#2}{#3}}             % CMP         \libcode{530.1505 C73}
\ndef{\jnComptRendue}[3]{C.\,R.~Acad. Sci. Paris S\'er. A-B \VolYearPP{#1}{#2}{#3}}      % CR   \libcode{505 A 161}
\ndef{\jnContMath}[3]{Contemporary Math. \VolYearPP{#1}{#2}{#3}}               %
\ndef{\jnDukeMJ}[3]{Duke Math. J. \VolYearPP{#1}{#2}{#3}}
\ndef{\jnDiffGeom}[3]{J.~Diff. Geom. \VolYearPP{#1}{#2}{#3}}                   % JDG         \libcode{516.3605 J86}
\ndef{\jnErgodicTheory}[3]{Ergodic Theory and Dynamical Systems \VolYearPP{#1}{#2}{#3}} % ETDS  \libcode{}
\ndef{\jnFuncAnal}[3]{J.~Functional Analysis \VolYearPP{#1}{#2}{#3}}           % JFA         \libcode{515.05 J88}
\ndef{\jnFunkAnalPril}[4]{Funct. Anal. Appl. \VolNoYearPP{#1}{#2}{#3}{#4}}  % JFAP
\ndef{\jnGAFA}[3]{GAFA \VolYearPP{#1}{#2}{#3}}                                 % GAFA        \libcode{}
\ndef{\jnIHES}[3]{IHES Publ. Math. (Paris) \VolYearPP{#1}{#2}{#3}}             % IHES        \libcode{}
\ndef{\jnIEOT}[3]{Integral Equations Operator Theory   \VolYearPP{#1}{#2}{#3}} %         \libcode{515.4505 I61}
\ndef{\jnIsrMath}[3]{Israel J.~Math. \VolYearPP{#1}{#2}{#3}}                   % IMJ         \libcode{}
\ndef{\jnKTheory}[3]{K-Theory \VolYearPP{#1}{#2}{#3}}                          % KT          \libcode{}
\ndef{\jnLetMathPhys}[3]{Lett. Math. Phys. \VolYearPP{#1}{#2}{#3}}             % LMP         \libcode{}
\ndef{\jnMathAnn}[3]{Math. Ann. \VolYearPP{#1}{#2}{#3}}                        % MAnn        \libcode{510.5 M51}
\ndef{\jnMathAnalAppl}[3]{J.~Math. Anal. and Appl. \VolYearPP{#1}{#2}{#3}}     % MathAnalAppl  \libcode{510.5 J88} only in repository
\ndef{\jnMathNachr}[3]{Math. Nachr. \VolYearPP{#1}{#2}{#3}}
\ndef{\jnMathPhys}[3]{J. Math. Phys. \VolYearPP{#1}{#2}{#3}}
\ndef{\jnMathSocJap}[3]{J. Math. Soc. Japan \VolYearPP{#1}{#2}{#3}}
\ndef{\jnOperTheory}[3]{J.~Operator Theory \VolYearPP{#1}{#2}{#3}}             % JOT         \libcode{}
\ndef{\jnPacJMath}[3]{Pacific J.~Math. \VolYearPP{#1}{#2}{#3}}                  % PJM         \libcode{510.5 P11}
\ndef{\jnPositivity}[3]{Positivity \VolYearPP{#1}{#2}{#3}}
\ndef{\jnProcAmerMS}[3]{Proc. Amer. Math. Soc. \VolYearPP{#1}{#2}{#3}}         % PAMS        \libcode{510.5 A5p}
\ndef{\jnProcCambPhilSoc}[3]{Math. Proc. Camb. Phil. Soc. \VolYearPP{#1}{#2}{#3}}
\ndef{\jnReineAngew}[3]{J.~Reine Angew. Math. \VolYearPP{#1}{#2}{#3}}          % JRAM        \libcode{510.5 J86}
\ndef{\jnTokyoMath}[3]{Tokyo J.~Math. \VolYearPP{#1}{#2}{#3}}
\ndef{\jnTopology}[3]{Topology \VolYearPP{#1}{#2}{#3}}
\ndef{\jnTransAmerMathSoc}[3]{Trans. Amer. Math. Soc. \VolYearPP{#1}{#2}{#3}}
\ndef{\jnIzvANSSSR}[3]{Izv. Akad. Nauk SSSR, Ser. Mat. \VolYearPP{#1}{#2}{#3}}
\ndef{\jnIzvVyshUchZav}[3]{Izv. Vyssh. Uch. Zav., Mat. \VolYearPP{#1}{#2}{#3} (Russian)}
\ndef{\jnIzdatLenUniv}[2]{Izdat. Leningrad. Univ., Leningrad, (#1), #2 (Russian)}
\ndef{\jnFieldsInsComm}[3]{Fields Inst. Comm. \VolYearPP{#1}{#2}{#3}}
\ndef{\jnDoklANSSSR}[3]{Dokl. Akad. Nauk SSSR \VolYearPP{#1}{#2}{#3}}
\ndef{\jnMatZametki}[3]{Matem. zametki \VolYearPP{#1}{#2}{#3}}
\ndef{\jnRussMathSurvey}[3]{Russian Math. Surveys \VolYearPP{#1}{#2}{#3}}
\ndef{\jnSibMathJ}[3]{Sib. Math.~J. \VolYearPP{#1}{#2}{#3}}
\ndef{\jnSovMath}[3]{J.~Soviet math. \VolYearPP{#1}{#2}{#3}}
\ndef{\jnTransMoscMathSoc}[3]{Trans. Moscow Math. Soc. \VolYearPP{#1}{#2}{#3}}
\ndef{\jnUMN}[3]{Uspekhi Mat. Nauk \VolYearPP{#1}{#2}{#3}}
\ndef{\bkTransMathMon}[2]{Trans. Math. Monographs, AMS, \volume{#1}, #2}
\ndef{\pbBirkhauser}[1]{Birkh\"auser, Boston, #1}
\ndef{\pbFactorial}[1]{Moscow, Factorial, #1}
\ndef{\pbGauthier}[1]{Gauthier-Villars, Paris, #1}
\ndef{\pbNauka}[1]{Moscow, Nauka, #1 (Russian)}
\ndef{\pbNaukaR}[1]{Москва, Наука, #1}
\ndef{\pbPrinceton}[1]{Princeton University Press, Princeton, New Jersey, #1}
\ndef{\pbPublPerish}[1]{Publish or Perish Inc., Berkeley, #1}
\ndef{\pbSpringer}[1]{Springer-Verlag, #1}
\ndef{\myauthor}[1]{\mbox{#1}}
\ndef{\Agmon}{\myauthor{Sh.\,Agmon}}
\ndef{\Ahiezer}{\myauthor{N.\,I.\,Ahiezer}}
\ndef{\Arazy}{\myauthor{J.\,Arazy}}
\ndef{\Aronszajn}{\myauthor{N.\,Aronszajn}}
\ndef{\Astashkin}{\myauthor{S.\,V.\,Astashkin}}
\ndef{\Atiyah}{\myauthor{M.\,Atiyah}}
\ndef{\Avron}{\myauthor{J.\,E.\,Avron}}
\ndef{\Azamov}{\myauthor{N.\,A.\,Azamov}}
\ndef{\Banach}{\myauthor{S.\,Banach}}
\ndef{\Benameur}{\myauthor{M-T.\,Benameur}}
\ndef{\Bennett}{\myauthor{C.\,Bennett}}
\ndef{\Berezin}{\myauthor{F.\,A.\,Berezin}}
\ndef{\Berline}{\myauthor{N.\,Berline}}
\ndef{\Birman}{\myauthor{M.\,Sh.\,Birman}}
\ndef{\Blackadar}{\myauthor{B.\,Blackadar}}
\ndef{\Bogolyubov}{\myauthor{N.\,N.\,Bogolyubov}}
\ndef{\Bonsall}{\myauthor{F.\,F.\,Bonsall}}
\ndef{\Bony}{\myauthor{J.\,F.\,Bony}}
\ndef{\BoosBavnbek}{\myauthor{B.\,Boo$\beta$-Bavnbek}}
\ndef{\Bott}{\myauthor{R.\,Bott}}
\ndef{\Branges}{\myauthor{L.\,de Branges}}
\ndef{\Bratteli}{\myauthor{O.\,Bratteli}}
\ndef{\Bredon}{\myauthor{G.\,E.\,Bredon}}
\ndef{\Breuer}{\myauthor{M.\,Breuer}}
\ndef{\Brown}{\myauthor{L.\,G.\,Brown}}
\ndef{\Bruneau}{\myauthor{V.\,Bruneau}}
\ndef{\Buslaev}{\myauthor{V.\,S.\,Buslaev}}
\ndef{\Carey}{\myauthor{A.\,L.\,Carey}}
\ndef{\CareyRW}{\myauthor{R.\,W.\,Carey}} %Richard
\ndef{\Cartan}{\myauthor{H.\,Cartan}}
\ndef{\Chilin}{\myauthor{V.\,I.\,Chilin}}
\ndef{\Coburn}{\myauthor{L.\,A.\,Coburn}}
\ndef{\Connes}{\myauthor{A.\,Connes}}
\ndef{\Cornfeld}{\myauthor{I.\,P.\,Cornfeld}}
\ndef{\Daletskii}{\myauthor{Yu.\,L.\,Daletski\u\i}}   %Daletskii
\ndef{\Dixmier}{\myauthor{J.\,Dixmier}}
\ndef{\DoddsPG}{\myauthor{P.\,G.\,Dodds}}
\ndef{\DoddsTK}{\myauthor{T.\,K.\,Dodds}}
\ndef{\Douglas}{\myauthor{R.\,G.\,Douglas}}
\ndef{\Dubrovin}{\myauthor{B.\,A.\,Dubrovin}}
\ndef{\Dugundji}{\myauthor{J.\,Dugundji}}
\ndef{\Duncan}{\myauthor{J.\,Duncan}}
\ndef{\Dunford}{\myauthor{N.\,Dunford}}
\ndef{\Dykema}{\myauthor{K.\,J.\,Dykema}}
\ndef{\Edwards}{\myauthor{R.\,E.\,Edwards}}
\ndef{\Eilenberg}{\myauthor{S.\,Eilenberg}}
\ndef{\Entina}{\myauthor{S.\,B.\,\`Entina}}
\ndef{\Fack}{\myauthor{T.\,Fack}} %Thierry
\ndef{\Faddeev}{\myauthor{L.\,D.\,Faddeev}}
\ndef{\Farber}{\myauthor{M.\,Farber}}
\ndef{\Farforovskaya}{\myauthor{Yu.\,B.\,Farforovskaya}}
\ndef{\Federer}{\myauthor{H.\,Federer}}
\ndef{\Fedosov}{\myauthor{B.\,V.\,Fedosov}}
\ndef{\Figiel}{\myauthor{T.\,Figiel}} %Tadeush?
\ndef{\Figueroa}{\myauthor{H.\,Figueroa}}
\ndef{\Fillmore}{\myauthor{P.\,A.\,Fillmore}}
\ndef{\Fomenko}{\myauthor{A.\,T.\,Fomenko}} % Anatolii Trofimovich
\ndef{\Fomin}{\myauthor{S.\,V.\,Fomin}}
\ndef{\Frohlich}{\myauthor{J.\,Fr\"ohlich}}
\ndef{\Fuglede}{\myauthor{B.\,Fuglede}}
\ndef{\Furutani}{\myauthor{K.\,Furutani}}
\ndef{\Gelfand}{\myauthor{I.\,M.\,Gelfand}}
\ndef{\Gesztesy}{\myauthor{F.\,Gesztesy}}     %Fritz
\ndef{\Getzler}{\myauthor{E.\,Getzler}} % Ezra
\ndef{\Gilkey}{\myauthor{P.\,B.\,Gilkey}}
\ndef{\Gitler}{\myauthor{S.\,Gitler}}
\ndef{\Glazman}{\myauthor{I.\,M.\,Glazman}}
\ndef{\Glimm}{\myauthor{J.\,Glimm}}
\ndef{\Gohberg}{\myauthor{I.\,C.\,Gohberg}}
\ndef{\Goldshtein}{\myauthor{Ya.\,Goldshtein}}
\ndef{\Golze}{\myauthor{F.\,Golze}}
\ndef{\GraciaBondia}{\myauthor{J.\,M.\,Gracia-Bond\'{i}a}}
\ndef{\Greenleaf}{\myauthor{F.\,P.\,Greenleaf}}
\ndef{\Gromov}{\myauthor{M.\,Gromov}}
\ndef{\Gunning}{\myauthor{R.\,C.\,Gunning}}
\ndef{\Haagerup}{\myauthor{U.\,Haagerup}}
\ndef{\Haag}{\myauthor{R.\,Haag}}
\ndef{\Halmos}{\myauthor{Halmos}}
\ndef{\Hardy}{\myauthor{G.\,H.\,Hardy}}
\ndef{\Herbst}{\myauthor{I.\,W.\,Herbst}}
\ndef{\Higson}{\myauthor{N.\,Higson}}  % Nigel
\ndef{\Hoermander}{\myauthor{L.\,Hoermander}} % Lars
\ndef{\Hoffman}{\myauthor{K.\,Hoffman}} % Kenneth Hoffman
\ndef{\Ito}{\myauthor{K.\,Ito}}
\ndef{\Jaffe}{\myauthor{A.\,Jaffe}}
\ndef{\James}{\myauthor{I.\,M.\,James}}
\ndef{\Javrjan}{\myauthor{V.\,A.\,Javrjan}}
\ndef{\Jitomirskaya}{\myauthor{S.\,Jitomirskaya}}
\ndef{\Kadison}{\myauthor{R.\,V.\,Kadison}}
\ndef{\Kalton}{\myauthor{N.\,J.\,Kalton}} % Nigel
\ndef{\Kato}{\myauthor{T.\,Kato}} % Tosio
\ndef{\Kobayashi}{\myauthor{S.\,Kobayashi}}
\ndef{\Koplienko}{\myauthor{L.\,S.\,Koplienko}}
\ndef{\Korotyaev}{\myauthor{E.\,Korotyaev}}
\ndef{\Kosaki}{\myauthor{H.\,Kosaki}}
\ndef{\Kostrykin}{\myauthor{V.\,Kostrykin}}
\ndef{\Kotani}{\myauthor{S.\,Kotani}}
\ndef{\Krein}{\myauthor{Kre\u\i n}}
\ndef{\KreinMG}{\myauthor{M.\,G.\,Kre\u\i n}}
\ndef{\KreinSG}{\myauthor{S.\,G.\,Kre\u\i n}}
\ndef{\Kuroda}{\myauthor{S.\,T.\,Kuroda}}
\ndef{\Leichtnam}{\myauthor{E.\,Leichtnam}}
\ndef{\Lesch}{\myauthor{M.\,Lesch}}
\ndef{\Lesniewski}{\myauthor{A.\,Lesniewski}}
\ndef{\Levitan}{\myauthor{B.\,M.\,Levitan}}
\ndef{\Lidskii}{\myauthor{V.\,B.\,Lidskii}}
\ndef{\Lifshits}{\myauthor{I.\,M.\,Lifshits}}
\ndef{\Lindenstrauss}{\myauthor{J.\,Lindenstrauss}}
\ndef{\Loday}{\myauthor{J.-L.\,Loday}}
\ndef{\Lord}{\myauthor{S.\,Lord}}      %Steven
\ndef{\Lorentz}{\myauthor{G.\,Lorentz}}
\ndef{\Magnus}{\myauthor{W.\,Magnus}}
\ndef{\Makarov}{\myauthor{K.\,A.\,Makarov}}
\ndef{\MakarovN}{\myauthor{N.\,Makarov}}
\ndef{\Mathai}{\myauthor{V.\,Mathai}}         %Varghese?
\ndef{\McKean}{\myauthor{H.\,P.\,McKean}}
\ndef{\Mishchenko}{\myauthor{A.\,S.\,Mishchenko}}
\ndef{\Molchanov}{\myauthor{S.\,A.\,Molchanov}}
\ndef{\Moore}{\myauthor{C.\,C.\,Moore}}
\ndef{\Moscovici}{\myauthor{H.\,Moscovici}}  %Henri?
\ndef{\Motovilov}{\myauthor{A.\,K.\,Motovilov}}
\ndef{\Moyer}{\myauthor{R.\,D.\,Moyer}}
\ndef{\Naboko}{\myauthor{S.\,N.\,Naboko}}
\ndef{\Narasimhan}{\myauthor{R.\,Narasimhan}}
\ndef{\Nomizu}{\myauthor{K.\,Nomizu}}
\ndef{\Novikov}{\myauthor{S.\,P.\,Novikov}}
\ndef{\Osterwalder}{\myauthor{K.\,Osterwalder}}
\ndef{\Patodi}{\myauthor{V.\,Patodi}}
\ndef{\Pagter}{\myauthor{B.\,de~Pagter}}  %Ben
\ndef{\Pastur}{\myauthor{L.\,A.\,Pastur}}  %Ben
\ndef{\Pavlov}{\myauthor{B.\,S.\,Pavlov}}
\ndef{\Pedersen}{\myauthor{G.\,K.\,Pedersen}}
\ndef{\Peller}{\myauthor{V.\,V.\,Peller}}
\ndef{\Perera}{\myauthor{V.\,S.\,Perera}}
\ndef{\Petunin}{\myauthor{Ju.\,I.\,Petunin}}
\ndef{\Phillips}{\myauthor{J.\,Phillips}}  %John
\ndef{\Piazza}{\myauthor{P.\,Piazza}}   %Paolo
\ndef{\Pincus}{\myauthor{J.\,D.\,Pincus}}   %Joel
\ndef{\Poincare}{Poincar\'e}
\ndef{\Postnikov}{\myauthor{M.\,M.\,Postnikov}} % Mikhail
\ndef{\Prinzis}{\myauthor{R.\,Prinzis}}
\ndef{\Privalov}{\myauthor{I.\,I.\,Privalov}}
\ndef{\Pushnitski}{\myauthor{A.\,B.\,Pushnitski}} % Alexander
\ndef{\Raeburn}{\myauthor{I.\,Raeburn}}
\ndef{\Raikov}{\myauthor{G.\,Raikov}}
\ndef{\Reed}{\myauthor{M.\,Reed}}
\ndef{\Rennie}{\myauthor{A.\,Rennie}}
\ndef{\Rickart}{\myauthor{C.\,E.\,Rickart}}
\ndef{\Riesz}{\myauthor{F.\,Riesz}}
\ndef{\Ringrose}{\myauthor{J.\,Ringrose}}
\ndef{\Rio}{\myauthor{R.\,del Rio}}
\ndef{\Robinson}{\myauthor{D.\,Robinson}}
\ndef{\Rossi}{\myauthor{H.\,Rossi}}
\ndef{\Rudin}{\myauthor{W.\,Rudin}}
\ndef{\Ruelle}{\myauthor{D.\,Ruelle}}
\ndef{\Ruzhansky}{\myauthor{M.\,Ruzhansky}}
\ndef{\Sakai}{\myauthor{Sh.\,Sakai}}
\ndef{\Sargsjan}{\myauthor{I.\,S.\,Sargsjan}}
\ndef{\Sato}{\myauthor{H.\,Sato}}
\ndef{\Schaeffer}{\myauthor{D.\,G.\,Schaeffer}}
\ndef{\Schluchtermann}{\myauthor{G.\,Schluchtermann}}
\ndef{\Schochet}{\myauthor{C.\,Schochet}}
\ndef{\SchroedingerE}{\myauthor{E.\,Schr\"odinger}}
\ndef{\Schroedinger}{\myauthor{Schr\"odinger}}
\ndef{\Schrohe}{\myauthor{E.\,Schrohe}}
\ndef{\Schwartz}{\myauthor{J.\,T.\,Schwartz}}
\ndef{\Sedaev}{\myauthor{A.\,A.\,Sedaev}}
\ndef{\Seiler}{\myauthor{R.\,Seiler}}
\ndef{\Semenov}{\myauthor{E.\,M.\,Semenov}}
\ndef{\Shabat}{\myauthor{B.\,V.\,Shabat}}
\ndef{\Shafarevich}{\myauthor{I.\,R.\,Shafarevich}}
\ndef{\Sharpley}{\myauthor{R.\,Sharpley}}
\ndef{\Shilov}{\myauthor{G.\,E.\,Shilov}}
\ndef{\Shirkov}{\myauthor{D.\,V.\,Shirkov}}
\ndef{\Shubin}{\myauthor{M.\,A.\,Shubin}}
\ndef{\Silverman}{\myauthor{H.\,Silverman}}
\ndef{\Simon}{\myauthor{B.\,Simon}}
\ndef{\Sinai}{\myauthor{Ya.\,G.\,Sinai}}
\ndef{\Singer}{\myauthor{I.\,M.\,Singer}}
\ndef{\Solomyak}{\myauthor{M.\,Z.\,Solomyak}}
\ndef{\Soloviev}{\myauthor{Yu.\,P.\,Soloviev}}
\ndef{\Spivak}{\myauthor{M.\,Spivak}}
\ndef{\Stein}{\myauthor{E.\,M.\,Stein}}
\ndef{\Stenkin}{\myauthor{V.\,V.\,Sten'kin}}
\ndef{\Stratila}{\myauthor{S.\,Stratila}}
\ndef{\Sucheston}{\myauthor{L.\,Sucheston}}
\ndef{\Sukochev}{\myauthor{F.\,A.\,Sukochev}}
\ndef{\Switzer}{\myauthor{R.\,M.\,Switzer}}
\ndef{\SzNagy}{\myauthor{B.\,Sz.-Nagy}}
\ndef{\Takesaki}{\myauthor{M.\,Takesaki}}
\ndef{\Taylor}{\myauthor{M.\,E.\,Taylor}}
\ndef{\Treves}{\myauthor{F.\,Treves}}
\ndef{\Troitsky}{\myauthor{E.\,V.\,Troitsky}}
\ndef{\Tzafriri}{\myauthor{L.\,Tzafriri}}
\ndef{\Varilly}{\myauthor{J.\,C.\,V\'{a}rilly}}
\ndef{\Vergne}{\myauthor{M.\,Vergne}}
\ndef{\Vladimirov}{\myauthor{V.\,S.\,Vladimirov}}
\ndef{\Voiculescu}{\myauthor{D.\,Voiculescu}}
\ndef{\Weiss}{\myauthor{G.\,Weiss}}
\ndef{\Wells}{\myauthor{R.\,O.\,Wells}}
\ndef{\Williams}{\myauthor{J.\,P.\,Williams}}
\ndef{\Winkler}{\myauthor{S.\,Winkler}}
\ndef{\Witten}{\myauthor{E.\,Witten}}
\ndef{\Wodzicki}{\myauthor{M.\,Wodzicki}}
\ndef{\Wojciechowski}{\myauthor{K.\,P.\,Wojciechowski}}
\ndef{\Yafaev}{\myauthor{D.\,R.\,Yafaev}}
\ndef{\Yosida}{\myauthor{K.\,Yosida}}
\ndef{\Zsido}{\myauthor{L.\,Zsido}}
\ndef{\mbCz}{\mbC^{(z)}} \ndef{\mbCr}{\mbC^{(r)}}
\ndef{\yy}{y}
\ndef{\ells}{{\ell_2}} \ndef{\hlambda}{{\mathfrak h_\lambda}}
\ndef{\hlambdao}{{\mathfrak h_\lambda^{(0)}}}
\ndef{\hlambdar}{{\mathfrak h_\lambda^{(r)}}}
\begin{document}
\title[Singular SSF's exist]{Non-trivial singular spectral shift \\ functions exist}
\author{\Azamov}
\address{School of Computer Science, Engineering and Mathematics
   \\ Flinders University
   \\ Bedford Park, 5042, SA Australia.}
\email{azam0001@csem.flinders.edu.au}
% \keywords{Spectral shift function, scattering matrix, Birman-\Krein\ formula}

\subjclass[2000]{ %Mathematics Subject Classification (2000).
    Primary 47A55; % Perturbation theory
    Secondary 47A11 % Local spectral properties
}
\begin{abstract} In this paper I prove existence of an irreducible pair of
operators $H_0$ and $H_0+V,$ where $H_0$ is a self-adjoint operator and $V$ is a self-adjoint trace-class operator,
such that the singular spectral shift function
$\xis_{H_0+V,H_0}$ of the pair is non-zero on the absolutely
continuous spectrum of $H_0.$
\end{abstract}
\maketitle
%\begin{center} {\small \sf\today}  \\ \bigskip Preliminary DRAFT \\ \bigskip v3.1 \end{center}

\section{Introduction}
Let $H_0$ be a self-adjoint operator and $V$ be a trace-class self-adjoint operator.
The Lifshits-Krein spectral shift function $\xi$ is the unique $L_1$-function on $\mbR$
such that for any $f \in C^\infty(\mbR)$ with compact support the trace formula
$$
  \Tr(f(H_0+V)-f(H_0)) = \int f'(\lambda)\xi(\lambda)\,d\lambda
$$
holds. Existence of such a function was proved in \cite{Kr53MS}.
Further, in \cite{BS75SM}, M.\,Sh.\,Birman and M.\,Z.\,Solomyak established the formula
$$
  \xi(\lambda) = \frac d{d\lambda} \int _0^1 \Tr\brs{V E_{(-\infty,\lambda]}(H_r) }\,dr
$$
for the spectral shift function; this formula is called spectral averaging formula.

In \cite{BK62DAN}, M.\,Sh.\,Birman and M.\,G.\,Krein proved that for a.e. $\lambda \in \mbR$
the formula
\begin{equation} \label{F: det S=exp(xi)}
  \det S(\lambda; H_0+V, H_0) = e^{-2\pi i \xi(\lambda)}
\end{equation}
holds, where $S(\lambda; H_0+V, H_0)$ is the so-called scattering matrix of the pair $H_0+V$ and $H_0$
(see e.g. \cite{Ya,Az3v4}).

In \cite{Az} I showed that for Shr\"odinger operators with sufficiently regular potentials
the following variant of the Birman-Krein formula holds:
\begin{equation} \label{F: det S=exp(xia)}
  \det S(\lambda; H_0+V, H_0) = e^{-2\pi i \xia(\lambda)}.
\end{equation}
The function $\xia(\lambda),$ which in \cite{Az} was called the absolutely continuous part of the spectral shift function,
can be given by the formula
$$
  \xia(\lambda) := \frac d{d\lambda} \int _0^1 \Tr\brs{V E_{(-\infty,\lambda]}(H_r^{(a)}) }\,dr,
$$
where $E_{(-\infty,\lambda]}(H)$ denotes the spectral projection of a self-adjoint operator $H,$ $H_r:=H_0+rV,$ $r \in \mbR,$
and $H_r^{(a)}$ is the absolutely continuous part of $H_r.$ The formula (\ref{F: det S=exp(xia)}) compared with the Birman-Krein formula
(\ref{F: det S=exp(xi)}) obviously implies that the function
\begin{equation} \label{F: def of xis}
  \xis(\lambda) := \frac d{d\lambda} \int _0^1 \Tr\brs{V E_{(-\infty,\lambda]}(H_r^{(s)}) }\,dr,
\end{equation}
called in \cite{Az} the singular part of the spectral shift function (or just singular spectral shift function),
is a.e. integer-valued. Here $H_r^{(s)}$ denotes the singular part of $H_r.$
For Schr\"odinger operators from the class, considered in \cite{Az}, this is a trivial result:
those operators do not have singular spectrum on the positive semi-axis, so that $\xis = 0$
on $[0,\infty),$ while on $(-\infty,0]$ the function $\xis$ coincides with $\xi,$ which is well-known to be integer-valued on $(-\infty,0].$

In \cite{Az3v4} the formula (\ref{F: det S=exp(xia)}) was proved for arbitrary self-adjoint operators $H_0$ and arbitrary self-adjoint trace-class
perturbations $V.$ Combined with the Birman-Krein formula, this implies that for any such operators the singular spectral shift function is a.e. integer-valued.
Looking at the definition (\ref{F: def of xis}) of the singular spectral shift function,
this is quite an unexpected result (in my opinion). But still, there were no examples of non-trivial singular spectral shift functions.

By a non-trivial example of singular spectral shift function I mean an example of a self-adjoint operator $H_0$ and a self-adjoint trace-class operator $V,$
such that $H_0$ and $V$ do not have common non-trivial invariant subspace and such that the restriction of the singular spectral shift function for the pair $H_0+V$ and $H_0$
to the absolutely continuous spectrum of $H_0$ is non-zero. Trivial examples are easily constructed: one can take the direct sum of any pair with a finite-dimensional pair,
--- if the support of the spectral shift function of the finite-dimensional pair overlaps the absolutely continuous spectrum of another pair, we are done.

In this paper I prove existence of a non-trivial singular spectral
shift function. The proof clearly indicates that non-trivial singular spectral
shift functions not only exist, --- they are ubiquitous. At the same time, it is not easy to point out
to a concrete pair $H_0$ and $V$ with non-trivial singular spectral shift function, and there seems to be a reason for this.

\section{Results}
\subsection{Preliminaries}
Let $H$ be a self-adjoint operator on a (complex separable) Hilbert space $\hilb$ and let $\set{E_\lambda}$ be its spectral decomposition.
Recall that a vector $f \in \hilb$ is absolutely continuous (respectively, singular, pure point, singular continuous) with respect to $H,$
if the measure with distribution function $\scal{E_\lambda f}{f}$ is absolutely continuous (respectively, singular, pure point, singular continuous).
The set of all absolutely continuous (respectively, singular, pure point, singular continuous) vectors form a (closed) subspace of $\hilb,$
which we denote by $\hilb^{(a)}$ (respectively, $\hilb^{(s)},$ $\hilb^{(pp)},$ $\hilb^{(sc)}$). The subspaces
$\hilb^{(a)},$  $\hilb^{(s)},$ $\hilb^{(pp)},$ $\hilb^{(sc)}$ are mutually orthogonal and invariant with respect to $H;$
also, the following decompositions hold: $\hilb = \hilb^{(a)} \oplus \hilb^{(s)}$ and $\hilb^{(s)} = \hilb^{(pp)} \oplus \hilb^{(sc)}.$
Restriction of $H$ to $\hilb^{(a)}$ (respectively, to $\hilb^{(s)},$ $\hilb^{(pp)},$ $\hilb^{(sc)}$) will be denoted by $H^{(a)}$ (respectively, by $H^{(s)},$ $H^{(pp)},$ $H^{(sc)}$).
The equalities
$$
  H = H^{(a)} \oplus H^{(s)} \ \ \text{and} \ \ H^{(s)} = H^{(pp)} \oplus H^{(sc)}
$$
hold. See e.g. \cite{Ya} for details.

Let $H_0$ be a self-adjoint operator and let $V$ be a trace-class self-adjoint operator.
We introduce the Lifshits-Krein spectral shift function $\xi_{H_0+V,H_0}$ of the pair $H_0,H_0+V$
by the Birman-Solomyak spectral averaging formula, as a distribution,
$$
  \xi(\phi) = \int_0^1 \Tr(V \phi(H_r))\,dr, \quad \phi \in C_c^\infty,
$$
where $H_r := H_0+rV,$ $r \in \mbR.$
The distribution $\xi$ is an absolutely continuous finite measure (see \cite{Kr53MS}).
Similarly, we introduce the absolutely continuous $\xia_{H_0+V,H_0}$
and singular $\xis_{H_0+V,H_0}$ parts of the spectral shift function by formulas
$$
  \xia(\phi) = \int_0^1 \Tr(V \phi(H_r^{(a)}))\,dr, \qquad  \xis(\phi) = \int_0^1 \Tr(V \phi(H_r^{(s)}))\,dr,
$$
where $H^{(a)}$ and $H^{(s)}$ denote the absolutely continuous and singular parts of a self-adjoint operator $H$ respectively.
It is shown in \cite{Az3v4} that $\xia$ and $\xis$ are also absolutely continuous finite measures (see \cite[Lemma 9.7]{Az3v4}).
Plainly,
$$
  \xi_{H_0+V,H_0} = \xia_{H_0+V,H_0} + \xis_{H_0+V,H_0}.
$$

One can also introduce the pure point and singular continuous parts of the spectral shift function,
by similar formulas:
$$
  \xi^{(pp)}(\phi) = \int_0^1 \Tr(V \phi(H_r^{(pp)}))\,dr, \qquad  \xi^{(sc)}(\phi) = \int_0^1 \Tr(V \phi(H_r^{(sc)}))\,dr.
$$
Plainly,
$$
  \xis_{H_0+V,H_0} = \xi^{(pp)}_{H_0+V,H_0} + \xi^{(sc)}_{H_0+V,H_0}.
$$
If $V \geq 0,$ then it is easy to see that $\xi^{(pp)}$ and $\xi^{(sc)}$ are also absolutely continuous.
Whether they are absolutely continuous for arbitrary trace-class $V,$ I don't know.

In \cite{Kr53MS} it was proved that $\xi$ is additive; that is, for any three self-adjoint operators $H_0,H_1,H_2$ with common domain
and trace-class differences the equality
$$
  \xi_{H_2,H_0} = \xi_{H_2,H_1} + \xi_{H_1,H_0}
$$
holds. In \cite{Az5} it was proved that $\xia$ and $\xis$ are also additive (see \cite[Theorem 2.2 and Corollay 2.3]{Az5}):
$$
  \xia_{H_2,H_0} = \xia_{H_2,H_1} + \xia_{H_1,H_0}, \qquad  \xis_{H_2,H_0} = \xis_{H_2,H_1} + \xis_{H_1,H_0}.
$$

\subsection{Idea of the proof}
In case of operators $H_0$ and $H_0+V$ on a finite dimensional Hilbert space, the spectral shift function $\xi(\lambda)$
is equal to the total number of eigenvalues of the family of operators $H_r = H_0+rV,$ which cross $\lambda$ in the right direction
minus the total number of eigenvalues which cross $\lambda$ in the left direction.
In finite-dimensional situation there is only pure point spectrum, so obviously $\xi(\lambda) = \xis(\lambda).$
Now, we take any other infinite-dimensional pair of operators $H_0'$ and $H_0'+V'$ and take the direct sum of those two pairs:
$H_0\oplus H_0'$ and $(H_0 + V) \oplus (H_0' + V').$ Obviously,
$$
  \xis_{(H_0 + V) \oplus (H_0' + V'), H_0\oplus H_0'} = \xis_{H_0 + V, H_0} + \xis_{H_0' + V', H_0'},
$$
with analogous equalities for all other parts of the spectral shift function. In this way we get a pair of infinite-dimensional
operators $H_0$ and $H_1$ with non-zero singular spectral shift function, but the problem is that this pair is not irreducible.
By the additivity property of the singular spectral shift function, for any other self-adjoint $H_2$ such that $H_2-H_0$ is trace-class,
we have the equality
$$
  \xis_{H_1,H_0} = \xis_{H_1,H_2} + \xis_{H_2,H_0}.
$$
It follows that if $\xis_{H_1,H_0}$ does not vanish on the absolutely continuous spectrum $\sigma^{(a)}$ of $H_0,$ then at least one of the functions
$\xis_{H_1,H_2}$ and $\xis_{H_2,H_0}$ also does not vanish on $\sigma^{(a)}.$
(Note that since differences $H_i-H_j$ are trace-class, their absolutely continuous spectra coincide).
Now, one has to ensure that both pairs $(H_0,H_2)$ and $(H_1,H_2)$
are either irreducible or that $\xis$ of an irreducible part of a pair does not vanish on the absolutely continuous spectrum of the reduced operators.

In accomplishing of this plan we have a big freedom of choice of operators due to the additivity property of the singular spectral shift function.

\subsection{Results}
Let $\tilde \hilb = L_2(\mbR)$ and let
$\hilb = \tilde \hilb \oplus \mbC.$

An operator $A$ on $\hilb$ can be represented in the form of a
matrix
$$
  A = \left(\begin{array}{cc} \tilde A & f_1 \\ \scal{f_2}{\cdot} & z_0
  \end{array}\right),
$$
where $\tilde A$ is an operator on $\tilde \hilb,$ $f_1,f_2 \in \tilde
\hilb$ and $z_0\in \mbC.$ It is not difficult to see that $A$ is
self-adjoint if and only if $\tilde A$ is self-adjoint, $f_1=f_2$ and
$z_0$ is real. So, a self-adjoint operator on $\hilb$ has the form
$$
  H = \left(\begin{array}{cc} D & f \\ \scal{f}{\cdot} & a_0
  \end{array}\right),
$$
where $D=D^*$ and $a_0 \in \mbR.$

Let $a \in \mbR$ and $v \in \tilde \hilb.$
%It is not difficult to see that a self-adjoint operator
%$$
%  V = \left(\begin{array}{cc} \tilde V & v \\ \scal{v}{\cdot} & a
%  \end{array}\right),
%$$
%is trace-class if and only if $\tilde V$ is trace-class. This follows
%from the fact that for any $\tilde v \in \tilde \hilb$ and any $a_0\in \mbC$
The self-adjoint operator
\begin{equation} \label{F: Va}
  V_a = \left(\begin{array}{cc} v\scal{v}{\cdot} & v \\ \scal{v}{\cdot} & a
  \end{array}\right)
\end{equation}
has rank less or equal to $2.$ Further, let
\begin{equation} \label{F: D}
  D = \frac 1i \frac{d}{dx};
\end{equation}
the operator $D$ is a self-adjoint operator on $\tilde \hilb,$
its spectrum is absolutely continuous and is equal to $\mbR.$
Let also
\begin{equation} \label{F: H0}
  H_0 = \left(\begin{array}{cc} D & 0 \\ 0 & -1
  \end{array}\right)
\ \ \text{and}  \ \
  V = \left(\begin{array}{cc} 0 & 0 \\ 0 & 2
  \end{array}\right).
\end{equation}
In this case $H_r := H_0+rV$ is equal to
\begin{equation} \label{F: H1}
  H_r = \left(\begin{array}{cc} D & 0 \\ 0 & -1 + 2r
  \end{array}\right).
\end{equation}
It is not difficult to see that
$$
  \xi_{H_1,H_0} = \xis_{H_1,H_0} = \chi_{[-1,1]}.
$$

Let $\tilde V = v \scal{v}{\cdot},$
where from now on the vector $v \in \tilde \hilb = L_2(\mbR)$ is given by
\begin{equation} \label{F: v}
  v(x) = \frac 1{\sqrt[4]{\pi}}e^{-\frac{x^2}{2}};
\end{equation}
the coefficient is chosen so that $\norm{v} = 1.$
It is well known that the Fourier transform of $v$ is equal to $v.$
The operator $\tilde V$ is one-dimensional and so it is trace-class.

We say that a (closed) subspace $\clK$ of $\hilb$ is invariant for an operator $H$ on a Hilbert space $\hilb,$
if $\clK \cap \euD(H)$ is dense in $\clK$ and if $Hf \in \clK$ for all $f \in \clK \cap \euD(H).$
We say that a pair of operators $H_1$ and $H_2$ is irreducible, if the only subspaces of $\hilb$
which are invariant for both $H_1$ and $H_2$ are $\set{0}$ and $\hilb.$

\begin{lemma} \label{L: (D,V) is irre-ble} The pair $(D, \tilde V)$ is irreducible.
\end{lemma}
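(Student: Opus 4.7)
The plan is to diagonalize $D$ by the Fourier transform $\mathcal{F}$ on $\tilde\hilb = L_2(\mbR)$, classify the closed subspaces invariant under the associated translation group, and then exploit the fact that $\mathcal{F}v = v$ vanishes nowhere.

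First I would observe that, in the sense the paper defines invariance, a closed subspace $\clK \subset \tilde\hilb$ is invariant under the self-adjoint operator $D$ if and only if it is invariant under the unitary translation group $U_t := e^{itD}$, $(U_t f)(x) = f(x+t)$. This is a standard spectral-theoretic fact: density of $\clK \cap \euD(D)$ in $\clK$ together with $D(\clK\cap\euD(D))\subset\clK$ forces all bounded Borel functions of $D$, in particular all spectral projections and all $e^{itD}$, to leave $\clK$ invariant. Passing to the Fourier side, $D$ becomes multiplication by the independent variable, and a closed subspace of $L_2(\mbR)$ invariant under multiplication by every bounded Borel function of the coordinate has the form $\chi_E L_2(\mbR)$ for some measurable $E\subset\mbR$, uniquely determined modulo null sets. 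Hence $\mathcal{F}\clK = \chi_E L_2(\mbR)$ for some such $E$.

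Next I bring in the second invariance. Since $\tilde V = v\scal{v}{\cdot}$ is rank one with range $\mbC v$, there are only two possibilities: either $\tilde V|_\clK = 0$, so that $\clK \subset v^\perp$, or there exists $f\in\clK$ with $\scal{v}{f}\neq 0$, and then $v = \scal{v}{f}^{-1}\tilde V f \in \clK$. Using that $\mathcal{F}v = v$ and passing to the Fourier picture, in the first case $\chi_E(\xi)v(\xi) = 0$ for almost every $\xi$, and since $v(\xi)=\pi^{-1/4}e^{-\xi^2/2}>0$ everywhere, the set $E$ is null, so $\clK = \set{0}$; in the second case $v = \chi_E v$ a.e., and the same pointwise positivity forces $\chi_E = 1$ a.e., giving $\clK = \tilde\hilb$.

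The only genuine subtlety is the first step, reconciling the paper's notion of invariance of a closed subspace under the unbounded operator $D$ with invariance under the unitary group $e^{itD}$; once this is in hand, everything is a transparent consequence of the strict positivity of the Gaussian fixed point of $\mathcal{F}$.
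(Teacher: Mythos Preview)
Your argument has a genuine gap at the very first step. You assert that, with the paper's notion of invariance ($\clK\cap\euD(D)$ dense in $\clK$ and $D(\clK\cap\euD(D))\subset\clK$), a closed subspace $\clK$ is $D$-invariant if and only if it is invariant under the unitary group $e^{itD}$, and hence under all bounded Borel functions of $D$. This equivalence is false: one-sided invariance under a self-adjoint operator does \emph{not} force the subspace to be reducing. A concrete counterexample for $D=\frac{1}{i}\frac{d}{dx}$ on $L_2(\mbR)$ is $\clK=L_2([0,1])$, the subspace of functions supported in $[0,1]$. Here $\clK\cap\euD(D)$ contains $C_c^\infty((0,1))$ and is dense in $\clK$; for any $f$ in this intersection the distributional derivative $f'$ is again an $L_2$ function supported in $[0,1]$, so $Df\in\clK$; yet $\clK$ is obviously not invariant under the translations $e^{itD}$, and on the Fourier side $\mathcal{F}\clK$ is certainly not of the form $\chi_E L_2(\mbR)$. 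Thus the classification of $D$-invariant subspaces on which the rest of your argument rests is unavailable.

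The paper's proof avoids this issue entirely: it never attempts to classify $D$-invariant subspaces. It uses only that $v$ is cyclic for $D$ (the vectors $D^k v$ span, via the Hermite functions) together with the rank-one structure of $\tilde V$. From $\clK\neq\tilde\hilb$ one gets $v\notin\clK$, hence $v\perp\clK$ --- the same dichotomy you observed --- and then an elementary induction, using only the symmetry of $D$ and the density of $\clK\cap\euD(D)$ in $\clK$, shows $D^k v\perp\clK$ for every $k$, whence $\clK=\set{0}$. That induction is precisely the piece that substitutes for the Fourier classification you invoked.
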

\begin{proof}
Let $\clK$ be a subspace of $\tilde \hilb$ which is invariant with respect to both $D$ and $\tilde V$
and let $\clK \neq \hilb.$

(A) Claim: $v \notin \clK.$
It is known that the vector $v$ is cyclic for $D$ ---
Hermite polynomials which form a basis of $\tilde \hilb$ are linear combinations of $D^k v.$
It follows that $v \notin \clK.$

(B) Claim: $v \perp \clK.$
Indeed, if $f \in \clK$ then $Vf = \scal{v}{f}v \in \clK.$
If additionally $f \neq 0$ and if $v$ is not orthogonal to $f,$ then
this implies that $v \in \clK.$ This contradicts (A).

(C) Claim: for any $n=0,1,2,\ldots$ \ $D^n v \perp \clK.$

Proof. By (B), for $n=0$ this is true. Assume that $D^n v \perp \clK$ for $n = k.$
Let $f \in \clK \cap \euD(D).$ Then $Df \in \clK,$ so by the assumption
$\scal{Df}{D^k v} = 0.$ It follows that $\scal{f}{D^{k+1} v} = 0.$
Since $\clK \cap \euD(D)$ is dense in $\clK,$ this implies that $D^{k+1} v \perp \clK.$
The proof of (C) is complete.

(D) Since the vector $v$ is cyclic for $D,$ it follows from (C) that $\clK = \set{0}.$

The proof is complete.
\end{proof}

%Now we take the operator $H_0$ given by (\ref{F: H0}), and define
%\begin{equation} \label{F: V1}
%  V_1 =
%  \left(\begin{array}{cc} v\scal{v}{\cdot} & v \\ \scal{v}{\cdot} & 1
%  \end{array}\right)
%\end{equation}
%and
%\begin{equation} \label{F: V2}
%  V_{-1} =
%  \left(\begin{array}{cc} v\scal{v}{\cdot} & v \\ \scal{v}{\cdot} & -1
%  \end{array}\right)
%\end{equation}

We consider operators $V_1$ and $V_{-1},$ given by (\ref{F: Va}).

It is not difficult to check that $V_1$ has rank one with eigenvector ${v \choose 1}$
and eigenvalue $2,$ and that $V_{-1}$ has rank two with eigenvectors
$$
  {v \choose -1 \pm \sqrt{2}}
$$
and eigenvalues $\pm \sqrt 2.$ Note also, that
$$
  H_0 + V_1 =  H_1 + V_{-1}.
$$

\begin{lemma} \label{L: pair (H0,V1) is irreducible} Let $H_0$ be the operator (\ref{F: H0}), where $D$ is given by (\ref{F: D}),
and let $V_1$ be the operator given by (\ref{F: Va}), where $v$ is given by (\ref{F: v}). Let $\mathbf v = {v \choose 1}.$
Let $\clK$ be the subspace of $\tilde \hilb$ generated by vectors $H_0^k \mathbf v,$ $k = 0,1,2,\ldots$ and
let $H':=H_0 \big|_{\clK}$ and $V':=V_1 \big|_{\clK}$ be restrictions
of operators $H_0$ and $V_1$ to $\clK$ (it is easily seen that $\clK$ is invariant with respect to both $H_0$ and $V_1$).
The pair $(H',V')$ is irreducible.
\end{lemma}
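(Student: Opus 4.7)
\medskip

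\noindent\textbf{Proof proposal.} The plan is to adapt the four-step strategy of Lemma~\ref{L: (D,V) is irre-ble}, with the roles of $D$, $\tilde V$, and $v$ now played respectively by $H'$, $V'$, and $\mathbf v$. The observation that makes this adaptation immediate is that $V_1$ is a rank-one operator with range spanned by $\mathbf v$: a direct computation from (\ref{F: Va}) gives
$V_1{f \choose z}=(\scal{v}{f}+z)\mathbf v=\scal{\mathbf v}{{f \choose z}}\mathbf v$,
so $V'$ sends all of $\clK$ into the one-dimensional subspace $\mbC\mathbf v\subseteq\clK$. (This also re-confirms that $\clK$ is $V_1$-invariant; $H_0$-invariance of $\clK$ is built into its definition.)

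Let $\mathcal L\subseteq\clK$ be a closed subspace invariant for both $H'$ and $V'$, with $\mathcal L\neq\clK$. First I would show (A) that $\mathbf v\notin\mathcal L$: by the very definition of $\clK$, the vector $\mathbf v$ is cyclic for $H'$, so any $H'$-invariant subspace containing $\mathbf v$ must equal $\clK$, contradicting $\mathcal L\neq\clK$. Next (B) $\mathbf v\perp\mathcal L$: for any $f\in\mathcal L$, $V'f=\scal{\mathbf v}{f}\mathbf v\in\mathcal L$, and if $\scal{\mathbf v}{f}\neq 0$ this would place $\mathbf v$ inside $\mathcal L$, contradicting (A). Then (C) $H_0^n\mathbf v\perp\mathcal L$ for every $n\geq 0$, by induction on $n$: the base case is (B), and for the inductive step I take $f\in\mathcal L\cap\euD(H')$, apply invariance to get $H'f\in\mathcal L$, and deduce from self-adjointness of $H_0$ together with the inductive hypothesis that $0=\scal{H'f}{H_0^n\mathbf v}=\scal{f}{H_0^{n+1}\mathbf v}$; density of $\mathcal L\cap\euD(H')$ in $\mathcal L$ completes the step. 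Finally (D) since the vectors $H_0^n\mathbf v$ span $\clK$ by construction and $\mathcal L\subseteq\clK$, we obtain $\mathcal L=\{0\}$.

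The only point of substance is the domain bookkeeping in step (C). I need $\mathbf v\in\euD(H_0^k)$ for every $k$, which holds because $v(x)=\pi^{-1/4}e^{-x^2/2}$ is Schwartz and hence in $\euD(D^k)$ for all $k$, and I need $\mathcal L\cap\euD(H')$ to be dense in $\mathcal L$, which is exactly the definition of invariant subspace adopted in the paragraph preceding Lemma~\ref{L: (D,V) is irre-ble}. Beyond these details the argument is a transcription of that lemma, with one simplification: the cyclicity of $\mathbf v$ for $H'|_{\clK}$, which takes the place of the cyclicity of $v$ for $D$, is tautological from the definition of $\clK$, so no Hermite-polynomial input is needed here.
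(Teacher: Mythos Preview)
Your proof is correct. After the common first move---observing that $V_1$ has rank one with range $\mbC\mathbf v$, so that $V_1\big|_{\mathcal L}=0$ (equivalently $\mathbf v\perp\mathcal L$) whenever $\mathcal L\subsetneq\clK$ is invariant---your argument and the paper's diverge. You run the orthogonality induction of Lemma~\ref{L: (D,V) is irre-ble} verbatim, using only self-adjointness of $H_0$ and the tautological cyclicity of $\mathbf v$ for $H'$; this is clean and requires nothing beyond $\mathbf v\in\bigcap_k\euD(H_0^k)$. The paper instead picks a smooth vector $\mathbf f\in\bigcap_k\euD\!\big((H'|_{\mathcal L})^k\big)$, writes it as ${f\choose 0}$ or ${f\choose 1}$, computes $H_0^k\mathbf f$ explicitly via the block-diagonal form of $H_0$, and then uses $V_1 H_0^k\mathbf f=0$ together with cyclicity of $v$ for $D$ to force $\mathbf f=0$. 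Your route is shorter and avoids both the existence of a smooth vector in $\mathcal L$ and the case split; the paper's route, on the other hand, exercises the concrete structure of $H_0$ and $v$, which is the same machinery needed for Lemma~\ref{L: pair (H1,V2) is irreducible} (where $V_{-1}$ has rank two and the rank-one shortcut is unavailable).
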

\begin{proof}
Assume the contrary: let $\clL$ be a non-trivial subspace of $\clK$ which is invariant with respect to both $H_0$ and $V_1.$
So, let $\mathbf f$ be a non-zero vector from $\clL.$

If $V_1 \mathbf f$ is non-zero, then the vector $\mathbf v$ belongs to $\clL,$
since $V_1 \mathbf f = \alpha \mathbf v$ for some $\alpha \in \mbC.$ Since $\mathbf v$ is cyclic for $H'$ (by definition), it follows that $\clL = \clK.$
This shows that $V_1 \mathbf f = 0$ for any $\mathbf f \in \clL.$

Since $\clL$ is invariant with respect to $H',$ the subspace $\clL \cap \euD(H')$ is dense in $\clL.$
Let $\tilde \clL$ be the maximal domain of the restriction of $H'$ to $\clL.$
%Since $\clL \cap \euD(H')$ is dense in $\tilde \clL,$ the operator $V_1$ vanishes on $\tilde \clL$ as well.

Now, let $\mathbf f \in \bigcap_{k=1}^\infty \euD(H'\big|_{\tilde \clL}^k)$ be a non-zero vector (such a vector exists).
The vector $\mathbf f$ can be assumed to be either in the form ${f \choose 0}$ or ${f \choose 1}.$

If $\mathbf f = {f \choose 0},$ then $H_0^k \mathbf f = {D^k f \choose 0} \in \clL$ for all $k=0,1,2,\ldots;$
since $V_1$ vanishes on $\clL,$ it follows
that $V_1 (D^k f,0) = 0$ for all $k;$ this implies that $\scal{v}{D^k f} = 0,$ which yields
$\scal{D^k v}{f} = 0;$ since $v$ is cyclic, it follows that $f = 0.$ This is a contradiction.

So, let $\mathbf f = {f \choose 1}.$
Since all vectors $H'^k \mathbf f$ belong to $\clL,$ it follows that all vectors
$$
  { D^k f \choose (-1)^k }
$$
belong to the kernel of $V_1.$ It follows from $V_1{D^k f \choose (-1)^k}$ that
$$
  \scal{D^k v}{f} = \scal{v}{D^k f} = (-1)^{k+1}
$$
for all $k=0,1,2,\ldots.$ It follows that
$$
  \scal{D^k v}{(1+D)f} = 0
$$
for all $k=0,1,2,\ldots.$ Since $v$ is cyclic for $D,$ it follows that $(1-D) f = 0.$
This implies that $f = 0.$ This implies that $(0,1) \in \tilde \clL,$ which in its turn implies that $V_1 (0,1) = 0,$ so that $v = 0.$ This is a contradiction.

The proof is complete.
% Let $\clK$ be a non-trivial subspace of $\hilb,$ which is invariant with respect to $H_0$ and $V_1.$
% Let $f \in \clK$ be a non-zero vector. We have $V_1 f \in \clK;$ since the image of $V_1$ consists of vectors
% of the form $(\alpha v, z_0),$ this implies that $\clK$ contains such a vector. By (A) of Lemma \ref{L: (D,V) is irre-ble},
% $z_0 \neq 0.$
% It follows that all vectors
% $(D^k v, z_k)$ (with some $z_k$) belong to $\clK.$ Since $v$ is cyclic for $D,$ this implies that
% $\clK = \hilb.$ \margdetails
\end{proof}

\begin{lemma} \label{L: pair (H1,V2) is irreducible} Let $H_1$ be the operator (\ref{F: H1}) (with $r=1$), where $D$ is given by (\ref{F: D});
let $V_{-1}$ be the operator given by (\ref{F: Va}) with $a=-1,$ where $v$ is given by (\ref{F: v}).
The pair $(H_1,V_{-1})$ is irreducible.
\end{lemma}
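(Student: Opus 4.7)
The plan is to argue by contradiction: suppose $\clL \subset \hilb$ is a closed subspace invariant under both $H_1$ and $V_{-1}$ with $\clL \neq \{0\}$ and $\clL \neq \hilb$. The main handle is the explicit eigenstructure of $V_{-1}$ recorded just above Lemma~\ref{L: pair (H0,V1) is irreducible}: two simple nonzero eigenvalues $\pm\sqrt 2$ with eigenvectors $\mathbf u_\pm := {v \choose -1 \pm \sqrt 2}$, and $\ker V_{-1} = \{g \in \tilde\hilb : g \perp v\} \oplus \{0\}$.

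I will split on whether $V_{-1}$ annihilates $\clL$. If not, pick $\mathbf f \in \clL$ with $V_{-1}\mathbf f \neq 0$ and decompose $\mathbf f = c_+ \mathbf u_+ + c_- \mathbf u_- + \mathbf f_0$ with $\mathbf f_0 \in \ker V_{-1}$. The identity
$$
V_{-1}^2 \mathbf f \;\pm\; \sqrt 2\, V_{-1} \mathbf f \;=\; 4 c_\pm \mathbf u_\pm
$$
then places $\mathbf u_+ \in \clL$ or $\mathbf u_- \in \clL$ (work with $\mathbf u_+$; the other case is entirely symmetric). From $H_1 \mathbf u_+ - \mathbf u_+ = {(D-1)v \choose 0}$, together with the Gaussian moment computation $\scal{v}{(D-1)v} = -1$ (using $\norm{v}=1$ and $\scal{v}{Dv}=0$), applying $V_{-1}$ to ${(D-1)v \choose 0}$ produces ${v \choose 1} \in \clL$. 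Subtracting this from $\mathbf u_+$ yields a nonzero multiple of ${0 \choose 1}$ in $\clL$; a further application of $V_{-1}$ produces ${v \choose 0} \in \clL$; iterating $H_1$ then gives ${D^k v \choose 0} \in \clL$ for all $k$, and cyclicity of $v$ for $D$ fills out $\tilde\hilb \oplus \{0\}$. Combined with ${0 \choose 1}$, this forces $\clL = \hilb$, contradicting nontriviality.

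If instead $V_{-1}$ vanishes on all of $\clL$, then $\clL \subset \ker V_{-1}$, so $\clL = \clL' \oplus \{0\}$ for some closed subspace $\clL' \subset v^\perp$ of $\tilde\hilb$. The $H_1$-invariance of $\clL$ gives $D$-invariance of $\clL'$, while $\clL' \perp v$ automatically makes $\tilde V = v\scal{v}{\cdot}$ vanish on $\clL'$. Lemma~\ref{L: (D,V) is irre-ble} then forces $\clL' \in \{\{0\}, \tilde\hilb\}$, and $\clL' \perp v \neq 0$ rules out $\tilde\hilb$, leaving $\clL = \{0\}$.

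The main obstacle is the algebra of the first case: it succeeds precisely because $V_{-1}$ has two distinct nonzero eigenvalues, so $(V_{-1}, V_{-1}^2)$ acts as a Vandermonde system on the $\mathbf u_\pm$-eigencomponents of $\mathbf f$ and separates them cleanly. This is why Lemma~\ref{L: pair (H0,V1) is irreducible}, dealing with the rank-one operator $V_1$, had to restrict to the cyclic subspace $\clK$ of $\mathbf v$, whereas here no such restriction is needed and irreducibility holds on all of $\hilb$.
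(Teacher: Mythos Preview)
Your argument is correct and follows the same overall strategy as the paper: reduce to showing that one of the vectors ${v \choose 0}$ or ${0 \choose 1}$ lies in the invariant subspace, by first getting hold of an eigenvector of $V_{-1}$ and then perturbing it with $H_1$. The execution differs in two small ways. First, where you use the spectral identity $V_{-1}^2 \pm \sqrt{2}\,V_{-1} = 4\,(\text{projection onto }\mathbf u_\pm)$ to land directly on an eigenvector, the paper instead applies $V_{-1}$ to an arbitrary ${f \choose 1}\in\clK$, obtains ${v \choose \alpha}$ with $\alpha=\frac{\scal{v}{f}-1}{\scal{v}{f}+1}$, and then does a short case split on whether $\alpha=-1\pm\sqrt2$; your Vandermonde trick bypasses that split cleanly. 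Second, for the case $\clL\subset\ker V_{-1}$ you invoke Lemma~\ref{L: (D,V) is irre-ble} directly, whereas the paper re-runs the cyclicity argument ($\scal{D^kv}{f}=0\Rightarrow f=0$) inline; your route is tidier but relies on the earlier lemma. Both proofs use the same key computation $\scal{v}{Dv}=0$ at the step where $H_1$ is applied to the eigenvector.
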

\begin{proof} (A) Assume the contrary: let $\clK$ be a non-trivial subspace of $\hilb,$ which is invariant with respect to both $H_1$
and $V_{-1}.$

Note that the proof will be complete, if we show that one of the vectors ${v \choose 0}$ or ${0 \choose 1}$ belongs to $\clK.$
Indeed, in this case the invariance of $\clK$ with respect to $V_{-1}$ would imply that the other vector also belongs to $\clK,$
and cyclicity of $v$ with respect to $D$ would complete the proof.

(B) Let $\mathbf f = {f \choose f_0} \in \clK$ be a non-zero vector.
Claim: we can assume that $f_0 = 1.$ Indeed, assume the contrary: $\mathbf f = {f \choose 0}.$
Since $\clK$ is invariant with respect to $H_1,$ it follows that ${D^k f \choose 0} \in \clK$ for all $k=0,1,\ldots.$
Since also $V_{-1} \clK \subset \clK,$ this implies that $\scal{v}{D^k f} = 0.$ It follows that $\scal{D^k v}{f} = 0$ for all
$k=0,1,\ldots.$ Since $v$ is cyclic for $D,$ it follows that $f = 0.$ The claim is proved.

(C) So, let $\mathbf f = {f \choose 1} \in \clK.$
It follows that
$$
  V_{-1} \mathbf f = { (\scal{v}{f} + 1)v \choose \scal{v}{f} - 1 } \in \clK.
$$
If one of the numbers $\scal{v}{f} \pm 1$ is zero, then the proof is complete by (A).

So, we can assume that $\scal{v}{f} \pm 1 \neq 0.$

It follows that ${v \choose \alpha} \in \clK,$ where $\alpha = \frac{\scal{v}{f} - 1}{\scal{v}{f} + 1}.$
Since $V_{-1} {v \choose \alpha} = (1+\alpha)^{-1} {v \choose \frac{1-\alpha}{1+\alpha}}\in \clK$ too, if $\alpha \neq -1 \pm \sqrt{2}$
(that is, if ${v \choose \alpha}$ is not an eigenvector of $V_{-1}$),
it follows that ${v \choose 0} \in \clK,$ and (A) completes the proof.

So, we can assume that ${v \choose \alpha} \in \clK$ is an eigenvector of $V_{-1}.$ Since $H_1{v \choose \alpha} \in \clK,$
it follows that ${Dv \choose \alpha} \in \clK.$ Further,
$$
  V_{-1}{Dv \choose \alpha} = {(\scal{v}{Dv}+\alpha)v \choose \scal{v}{Dv}-\alpha} \in \clK.
$$
It is easy to check that this vector and ${v \choose \alpha} \in \clK$ are linearly independent; hence, the proof is complete by (A).
\end{proof}

\begin{thm} There exists an irreducible pair consisting of a self-adjoint operator $H_0$ with non-empty absolutely continuous spectrum
and a trace-class self-adjoint operator $V,$ such that the restriction of the singular
spectral shift function $\xis_{H_0+V,H_0}$ of the pair $(H_0,H_0+V)$ to the absolutely continuous spectrum of $H_0$
is non-zero.
\end{thm}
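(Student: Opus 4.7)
The plan is to combine the explicit finite-dimensional computation $\xis_{H_1,H_0} = \chi_{[-1,1]}$ with additivity of the singular spectral shift function and the crucial identity $H_0 + V_1 = H_1 + V_{-1}$. Setting $H_2 := H_0 + V_1 = H_1 + V_{-1}$, additivity yields
\[
  \chi_{[-1,1]} \;=\; \xis_{H_1,H_0} \;=\; \xis_{H_1,H_2} + \xis_{H_2,H_0},
\]
so at least one of the two summands must be non-zero on some subset of $[-1,1]$. I would then split into two cases according to which summand is non-trivial.

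In the first case, $\xis_{H_1,H_2}$ is non-zero on a subset of $[-1,1]$. This case is immediate: by Lemma \ref{L: pair (H1,V2) is irreducible} the pair $(H_1, V_{-1})$ is already irreducible on $\hilb$, and $[-1,1]$ is contained in the absolutely continuous spectrum of $H_1$, which by the Kato--Rosenblum theorem coincides with that of $H_0$, namely $\mbR$. Thus $(H_1, V_{-1})$ witnesses the theorem with no further work.

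In the complementary case, $\xis_{H_2,H_0}$ is non-zero on a subset of $[-1,1]$. Here I would decompose $\hilb = \clK \oplus \clK^\perp$, where $\clK$ is the $H_0$-cyclic subspace generated by $\mathbf v = {v \choose 1}$. Since $V_1 = \mathbf v \scal{\mathbf v}{\cdot}$ has rank one with range $\mbC\mathbf v \subset \clK$, it vanishes on $\clK^\perp$, and both $H_0$ and $H_2$ respect the decomposition. Consequently $\xis_{H_2,H_0}$ agrees with the singular SSF of the restricted pair $(H_0\big|_\clK, H_2\big|_\clK)$, and that restricted pair is irreducible by Lemma \ref{L: pair (H0,V1) is irreducible}.

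The remaining point, and the one I anticipate will require the most care, is verifying that the absolutely continuous spectrum of $H_0\big|_\clK$ contains $[-1,1]$ -- otherwise the non-trivial portion of $\xis$ could conceivably sit on a singular part of the reduced operator. I would settle this by computing the spectral measure of $\mathbf v$ for $H_0 = D \oplus (-1)$: it is the sum of the spectral measure of $v$ for $D$ with the point mass $\delta_{-1}$ at the second summand. Since the Fourier transform conjugates $D$ to multiplication by $x$ and fixes $v$, the former measure equals $|v(x)|^2\,dx = \pi^{-1/2} e^{-x^2}\,dx$, which is absolutely continuous with support $\mbR$. Hence $\sigma^{(a)}(H_0\big|_\clK) = \mbR \supset [-1,1]$, and the theorem follows in this case as well.
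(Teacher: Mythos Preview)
Your proof is correct and follows essentially the same structure as the paper's: additivity of $\xis$ combined with the identity $H_0 + V_1 = H_1 + V_{-1}$, then the same two-case split handled via Lemmas~\ref{L: pair (H1,V2) is irreducible} and~\ref{L: pair (H0,V1) is irreducible}. The only variation is in the second case, where the paper verifies $\sigma^{(a)}(H_0|_\clK) = \mbR$ by noting that $\dim\clK^\perp \leq 1$ (so passing to $\clK$ removes at most a rank-one piece), whereas you compute the spectral measure of $\mathbf v$ directly; both arguments are fine.
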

\begin{proof} Let $H_0,$ $V_1$ and $V_{-1}$ be as in Lemmas \ref{L: pair (H0,V1) is irreducible} and \ref{L: pair (H1,V2) is irreducible}.
%By Lemma \ref{L: pairs are irreducible}, both pairs $(H_0,V_1)$ and $(H_1,V_{-1})$ are irreducible.
Additivity of the singular spectral shift \cite[Corollary 2.3]{Az5} implies that
\begin{equation} \label{F: xis=xis+xis}
  \xis_{H_0+V_1,H_0} + \xis_{H_1,H_1+V_{-1}} = \xis_{H_1,H_0} = \chi_{[-1,1]}.
\end{equation}
So, at least one of the functions $\xis_{H_0+V_1,H_0}$ or $\xis_{H_1,H_1+V_{-1}}$ is non-zero
on $[-1,1],$ which is a subset of the absolutely continuous spectrum of $H_0$ (and of $H_1$).

If $\xis_{H_1,H_1+V_{-1}} \neq 0$ on $[-1,1],$ then we are done by Lemma \ref{L: pair (H1,V2) is irreducible}.

If $\xis_{H_1,H_1+V_{-1}} = 0$ on $[-1,1],$ then we note that the complement of the subspace $\clK$
generated by vectors $H_0^k \mathbf v,$ $k=0,1,2,\ldots,$ has dimension $\leq 1;$
the subspace $\clK$ is invariant with respect to both $H_0$ and $V_1,$ and by Lemma \ref{L: pair (H1,V2) is irreducible} the restrictions
$H_0'$ and $V_1'$ of operators $H_0$ and $V_1$ to the subspace $\clK$ give an irreducible pair. Now note that the restriction of $V_1$
to $\clK^\perp$ is zero, so that
$$
  \xis_{H_0'+V_1',H_0'} = \xis_{H_0+V_1,H_0} = \chi_{[-1,1]}.
$$
Further, since $\clK^\perp$ is at most one-dimensional, the difference $H_0-H_0'$ is a finite-rank operator.
This implies that the absolutely continuous spectrum of $H_0'$ coincides with that of $H_0,$ which is $\mbR.$

The proof is complete.
\end{proof}

\begin{lemma} \label{L: pure point is empty} If $r>0,$ then for any $\alpha \in \mbR$ the pure point spectrum of the operator
$$
  H := \left(\begin{array}{cc} D + r\scal{v}{\cdot}v & rv \\ r\scal{v}{\cdot} & \alpha
  \end{array}\right),
$$
where $D$ is given by (\ref{F: D}) and $v$ is given by (\ref{F: v}), is empty.
\end{lemma}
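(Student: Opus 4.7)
The plan is to argue by contradiction: suppose $H$ admits an eigenvector ${f \choose c}$ with eigenvalue $\lambda \in \mbR$, and deduce $r=0$. Writing $H {f \choose c} = \lambda {f \choose c}$ component-wise yields the system
\begin{equation*}
  Df + r\bigl(\scal{v}{f}+c\bigr) v = \lambda f, \qquad r\scal{v}{f} + \alpha c = \lambda c.
\end{equation*}
First I would dispose of the case $c=0$: the second equation then forces $\scal{v}{f} = 0$ (here I use $r>0$), so the first reduces to $Df = \lambda f$. Since $D$ has purely absolutely continuous spectrum by (\ref{F: D}), this forces $f=0$, contradicting non-triviality. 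Hence $c \neq 0$ and after rescaling I may take $c=1$.

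With $c=1$ set $\beta := r\bigl(\scal{v}{f}+1\bigr)$, which by the second equation equals $\lambda - \alpha + r$. The first equation then reads
\begin{equation*}
  (D-\lambda) f = -\beta v.
\end{equation*}
Because the perturbation $r\scal{v}{\cdot}v$ is bounded, $\euD\bigl(D + r\scal{v}{\cdot}v\bigr) = \euD(D)$, so this is a classical identity in $L_2(\mbR)$. Applying the Fourier transform, which conjugates $D$ to multiplication by $\xi$ and fixes $v$ (as noted after (\ref{F: v})), I obtain
\begin{equation*}
  (\xi-\lambda)\,\hat f(\xi) = -\beta\, v(\xi).
\end{equation*}

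Now comes the decisive step, which uses the specific form of $v$. Since $v(\xi) = \pi^{-1/4} e^{-\xi^2/2}$ is nowhere zero, the function $-\beta v(\xi)/(\xi-\lambda)$ is not in $L_2$ in any neighbourhood of $\xi=\lambda$ unless $\beta = 0$. Hence $\beta = 0$, which immediately gives $\hat f \equiv 0$, i.e.\ $f=0$. Substituting $f=0$ and $c=1$ into the second equation of the system yields $\alpha = \lambda$, while $\beta = 0$ reads $\lambda = \alpha - r$; combining, $r = 0$, contradicting the hypothesis $r > 0$.

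I do not expect a serious obstacle here: the only subtlety is the $L_2$ argument that forces $\beta=0$, which depends essentially on $v$ being a Gaussian (so that $v(\lambda) \neq 0$ for every real $\lambda$); for a vector $v$ with $v(\lambda_0) = 0$ the same computation would permit genuine eigenvalues at $\lambda_0$, so the non-vanishing of $v$ on all of $\mbR$ is really the crucial input and should be highlighted in the write-up.
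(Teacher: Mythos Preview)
Your proof is correct and follows essentially the same route as the paper: pass to the Fourier side, use $\hat v = v$, and observe that $v(\xi)/(\xi-\lambda)\notin L_2$ forces the coefficient in front to vanish. The paper's version is slightly more economical---it treats both components of the eigenvector at once (no case split on $c$) and never invokes the second scalar equation, deducing $f=0$ and $f_0=0$ directly from $\scal{v}{f}+f_0=0$---but the key idea is identical.
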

\begin{proof}
Assume that there is a non-zero vector $\mathbf f = { f \choose f_0 } \in \hilb$
such that $H \mathbf f = \lambda \mathbf f$ for some $\lambda \in \mbR.$
This implies that $f$ belongs to the domain of $D.$ Further, we have
$$
  H\mathbf f = \left(\begin{array}{c} D f + r\scal{v}{f}v + rf_0 v \\ r\scal{v}{f} + \alpha f_0
  \end{array}\right) = \left(\begin{array}{c} \lambda f \\ \lambda f_0 \end{array}\right).
$$
This implies that
$$
  D f = \lambda f - r\scal{v}{f} v - r f_0 v,
$$
so that $f' \in L_2(\mbR).$ Taking the Fourier transform of the last equality gives
$$
  \xi \hat f(\xi) = \lambda \hat f(\xi) - r\scal{v}{f} \hat v(\xi) - r f_0 \hat v(\xi).
$$
Since $\hat v = v,$ it follows that
$$
  \hat f(\xi) = -r\brs{\scal{v}{f} + f_0} \cdot \frac{v(\xi)}{\xi - \lambda}.
$$
Since $\frac{v(\xi)}{\xi - \lambda}$ is not $L_2,$ it follows that $f = 0$ and $f_0=0;$ that is, $\mathbf f=0.$

This contradiction completes the proof.
\end{proof}

By Lemma \ref{L: pure point is empty},
the pure point spectrum of operators $H_0+rV_1$ and $H_1+rV_{-1},$ $r \in (0,1],$ is empty.
It follows that for both pairs of operators
\begin{equation} \label{F: xipp=0}
  \xi^{(pp)}_{H_0+rV_1,H_0} = 0 \quad \text{and} \quad \xi^{(pp)}_{H_1,H_1+rV_{-1}} = 0.
\end{equation}
Consequently, by (\ref{F: xis=xis+xis}),
$$
  \xi^{(sc)}_{H_0+V_1,H_0} + \xi^{(sc)}_{H_1,H_1+V_{-1}} = \chi_{[-1,1]},
$$
so that
$$
  \xi^{(sc)}_{H_0+V_1,H_0} \neq 0 \quad \text{or} \quad \xi^{(sc)}_{H_1,H_1+V_{-1}} \neq 0.
$$
\begin{cor} The pure point and the singular continuous spectral shift distributions are not additive.
\end{cor}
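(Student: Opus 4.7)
The plan is to turn the discussion immediately preceding the corollary into a contradiction with the hypothetical additivity of $\xi^{(pp)}$ and $\xi^{(sc)}$, using the three-operator chain $H_0,\ H_0+V_1 = H_1+V_{-1},\ H_1$. First I would compute $\xi^{(pp)}_{H_1,H_0}$ and $\xi^{(sc)}_{H_1,H_0}$ directly from (\ref{F: H1}): since $H_r = D \oplus (-1+2r)$ is block-diagonal on $\tilde\hilb \oplus \mbC$, its singular part is purely pure point, consisting of the simple eigenvalue $-1+2r$. Hence
$$
  \xi^{(pp)}_{H_1,H_0} = \xis_{H_1,H_0} = \chi_{[-1,1]}, \qquad \xi^{(sc)}_{H_1,H_0} = 0.
$$

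Now I would assume, for contradiction, that $\xi^{(pp)}$ is additive. Applied to the chain above together with the identity $H_0+V_1 = H_1+V_{-1}$, additivity would give
$$
  \xi^{(pp)}_{H_1,H_0} = \xi^{(pp)}_{H_0+V_1,H_0} + \xi^{(pp)}_{H_1,H_1+V_{-1}}.
$$
The right-hand side vanishes by (\ref{F: xipp=0}) (taking $r=1$), but the left-hand side equals $\chi_{[-1,1]}$, which is non-zero. This contradiction disproves additivity of $\xi^{(pp)}$.

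For $\xi^{(sc)}$ I would argue symmetrically: assumed additivity along the same chain would yield
$$
  0 = \xi^{(sc)}_{H_1,H_0} = \xi^{(sc)}_{H_0+V_1,H_0} + \xi^{(sc)}_{H_1,H_1+V_{-1}},
$$
whereas the displayed identity just above the corollary (obtained by subtracting (\ref{F: xipp=0}) from (\ref{F: xis=xis+xis})) asserts that this sum equals $\chi_{[-1,1]} \neq 0$. Again a contradiction.

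There is no serious obstacle: the entire content of the corollary is packaged in the preceding paragraph, and the only task is to make the failure of additivity logically explicit by exhibiting a single triple of operators where the would-be additivity identity fails. The only point requiring a line of care is the explicit computation $\xi^{(pp)}_{H_1,H_0} = \chi_{[-1,1]}$, $\xi^{(sc)}_{H_1,H_0}=0$, which follows at once from the block-diagonal form of $H_r$ and the definitions of $\xi^{(pp)}$, $\xi^{(sc)}$ as integrals of $\Tr\brs{V \phi(H_r^{(pp)})}$ and $\Tr\brs{V \phi(H_r^{(sc)})}$ respectively.
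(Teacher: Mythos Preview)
Your proposal is correct and follows essentially the same argument as the paper: both hinge on the fact that along the straight-line path $H_r$ in (\ref{F: H1}) the singular part is purely pure point, so $\xi^{(pp)}_{H_1,H_0}=\chi_{[-1,1]}$ while by (\ref{F: xipp=0}) the two pieces along the $V_1$/$V_{-1}$ chain vanish. The only cosmetic difference is that for $\xi^{(sc)}$ the paper infers non-additivity abstractly from $\xi^{(sc)}=\xi^{(s)}-\xi^{(pp)}$ together with the additivity of $\xi^{(s)}$, whereas you redo the explicit computation; both routes are immediate.
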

\begin{proof} It follows from (\ref{F: xipp=0}) and (\ref{F: xis=xis+xis}) that pure point spectral shift function is not additive.
This fact, combined with additivity of the singular spectral shift function, implies non-additivity of the singular continuous spectral shift function.
\end{proof}
This corollary shows a significant difference between absolutely continuous and singular spectral shift functions, on one hand,
and pure point and singular continuous spectral shift functions on the other hand.

%Since
%$$
%  \xi^{(sc)}_{H_0+V,H_0}(\lambda) = \frac d{d\lambda} \int_0^1 \Tr(V E_\lambda^{H_r} P^{(sc)}(H_r))\,dr,
%$$
%it follows that the Lebesgue measure of the set of values of $r,$ for which
%the operator $H_r$ has singular continuous spectrum, is positive.
%That is, the non-trivial singular spectral shift for these operators
%occurs due to singular continuous spectrum!

Note that the results of this paper are in full accordance with the generic property of the singular
continuous spectrum which was observed in \cite{RJMS,RMS} (see also \cite{SimTrId2} and \cite{SW}.

The example of a non-trivial singular spectral shift function presented here is a clear indication of the fact that
non-trivial singular spectral shift functions are ubiquitous, even though it is not easy to present them explicitly;
they are a bit like transcendental numbers.

Note that eigenvalues of $H_r$ in the absolutely continuous spectrum can pop up suddenly and disappear in the same way as $r$ changes (see some examples in \cite{SimTrId2}), but it seems that
they do not contribute to the singular part of the spectral shift function.
In \cite{Az3v4} I made a conjecture that in the case of trace-class perturbations $V$ the pure point part of the
spectral shift function must be zero on the absolutely continuous spectrum (which is the same for all operators $H_r$ in the path).

%\input ../ListOfRef/MyListOfRef
%    MyListOfRef.tex
%                  This is a file of references (written by Nurulla Azamov)
%\input H:/Documents/Nur/B/CapFile
%\input H:/Documents/Nur/MyTexUtils/Macros
%\input H:/Documents/Nur/ListOfRef/MyMakeLit

\rndef{\emph}[1]{{\it #1}}

\mathsurround 0pt
\ndef{\AndSoOn}{$\dots$}

%\end{document}
%  End of MyListOfRef.tex

\end{document}